% ------------------------------------------------------------------------
% bjourdoc.tex for birkjour.cls*******************************************
% ------------------------------------------------------------------------
%%%%%%%%%%%%%%%%%%%%%%%%%%%%%%%%%%%%%%%%%%%%%%%%%%%%%%%%%%%%%%%%%%%%%%%%%%

\documentclass{birkjour}
\usepackage[all]{xy}
\usepackage[utf8]{inputenc}
\usepackage{hyperref}
\usepackage{amsmath,amssymb}
\usepackage{enumerate,multicol}
%\usepackage{xcolor}
%\usepackage{pagecolor}
%\definecolor{Mycolor}{HTML}{002015}
%\pagecolor{Mycolor}
%\color{white}
%
%
% THEOREM Environments (Examples)-----------------------------------------
%
\newtheorem{thm}{Theorem}[section]
\newtheorem{cor}[thm]{Corollary}

\newtheorem{prop}[thm]{Proposition}
\theoremstyle{definition}

\theoremstyle{remark}

\numberwithin{equation}{section}

\DeclareMathOperator{\Ricci}{Ric}
\DeclareMathOperator{\Hess}{\mathrm{Hess}}

\def\rm#1{\mathrm{#1}}

\def\bb#1{\mathbb{#1}}

\usepackage[all]{xy}

\def\action#1#2{\,_{#1}{\times}_{#2}\,}

\begin{document}
	
	%-------------------------------------------------------------------------
	% editorial commands: to be inserted by the editorial office
	%
	%\firstpage{1} \volume{228} \Copyrightyear{2004} \DOI{003-0001}
	%
	%
	%\seriesextra{Just an add-on}
	%\seriesextraline{This is the Concrete Title of this Book\br H.E. R and S.T.C. W, Eds.}
	%
	% for journals:
	%
	%\firstpage{1}
	%\issuenumber{1}
	%\Volumeandyear{1 (2004)}
	%\Copyrightyear{2004}
	%\DOI{003-xxxx-y}
	%\Signet
	%\commby{inhouse}
	%\submitted{March 14, 2003}
	%\received{March 16, 2000}
	%\revised{June 1, 2000}
	%\accepted{July 22, 2000}
	%
	%
	%
	%---------------------------------------------------------------------------
	%Insert here the title, affiliations and abstract:
	%

	\title[Positive Ricci on fiber bundles]{Positive Ricci curvature on fiber bundles with compact structure group}

	%----------Author 1
	\author[Cavenaghi]{Leonardo Francisco Cavenaghi}
	
	\address{%
		Instituto de Matem\'atica e Estat\'istica -- USP, R. do Mat\~ao, Vila Universit\'aria, 1010, 05508-090, S\~ao Paulo - SP, Brazil}
	
	\email{leonardofcavenaghi@gmail.com}
	
	\thanks{This work was financially supported by CNPq 131875/2016-7, 404266/2016-9 and by FAPESP 2017/24680-1.}
	%----------Author 2
	\author[Speran\c ca]{Llohann Dallagnol Speran\c ca}
	\address{Instituto de Ciência e Tecnologia -- Unifesp, Avenida Cesare Mansueto Giulio Lattes, 1201,  12247-014, São José dos Campos, SP, Brazil}
	\email{speranca@unifesp.br}
	%----------classification, keywords, date
	%\subjclass{primary 99Z99; Secondary 00A00}
	
	\keywords{Fiber Bundles, Positive Ricci curvature, Compact structure group}
	
	\date{}
	%----------additions
	%\dedicatory{To my boss}
	%%% ----------------------------------------------------------------------
	
	\begin{abstract}
		The aim of this paper is to present a direct and simple proof of a result concerning the existence of metrics of positive Ricci curvature on the total space of fiber bundles with compact structure groups. In particular, it  generalizes and puts in a unified framework the results in Nash \cite{nash1979positive} and Poor \cite{poor1975some}. 
		With the intention of disseminating this result, we apply it to build new examples of manifolds with positive Ricci curvature, including bundles whose base consists of gradient shrinking Ricci solitons. 
	\end{abstract}
	
	%%% ----------------------------------------------------------------------
	\maketitle
	%%% ----------------------------------------------------------------------
	%\tableofcontents
	\section{Introduction}
	
	The amount of examples of manifolds with positive sectional curvature is small compared to those of manifolds with non--negative sectional curvature (see \cite{ziller2007examples,Zillerpositive}). However, there is no theorem  distinguishing  these two classes in the compact simply connected case. In this scenario, the understanding of special classes of examples has an special importance, since it can lead to new insights regarding possible obstructions to move from one class to the other. An intermediate step consists in constructing examples of manifolds with positive Ricci curvature.
	
	%Since Milnor discovered the existence exotic spheres, questions on which of these spheres admit metrics of positive curvatures were exploited. Until these days, it is not known an example of an exotic sphere with positive sectional curvature.
	
	%On the other hand, 
	
	In this direction, Nash \cite{nash1979positive} and  Poor \cite{poor1975some} proved that the total spaces of some classes of fiber bundles admit metrics of positive Ricci curvature: for instance, of linear sphere bundles, vector bundles and principal bundles over manifolds with positive Ricci curvature. In particular, their results provide metrics of positive Ricci curvature on exotic spheres. Other interesting results along the same lines are in \cite{WB,grove, tusch, Sch}.
	
	Here we present an elementary proof for the following natural generalization of Nash and Poor results:
	
	\begin{thm}\label{horizontal}
		Let $F\hookrightarrow M\stackrel{\pi}{\to}B$ be fiber bundle with compact total space and structure group $G$. Suppose that
		\begin{enumerate}
			\item $B$ has a metric of positive Ricci curvature,
			\item A principal orbit of the $G$-action on $F$ has finite fundamental group,
			\item $F$ has a $G$-invariant metric such that the submersion metric on $F^{\mathrm{reg}}/G$ has $\Ricci_{F^{\mathrm{reg}}/G}\ge 1$. 
		\end{enumerate}
		Then $M$ admits a metric of positive Ricci curvature.
	\end{thm}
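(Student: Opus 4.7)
The plan is to build the desired metric on $M$ by realizing it as an associated bundle and descending a suitable $G$-invariant metric from a principal cover. Concretely, the structure group hypothesis provides a principal $G$-bundle $\pi_P\co P\to B$ with $M\cong P\times_G F$, and $G$ acts diagonally and freely on $P\times F$. The construction will fix a principal connection on $\pi_P$, a bi-invariant metric $Q$ on $\lie g$, the positive-Ricci metric $g_B$ on $B$ given by hypothesis (1), and the $G$-invariant metric $g_F$ on $F$ from hypothesis (3). For a parameter $t>0$, equip $P\times F$ with the $G$-invariant product metric whose $P$-factor is the connection metric $\pi_P^*g_B\oplus t^2 Q$ and whose $F$-factor is $g_F$; declaring the quotient projection $P\times F\to M$ a Riemannian submersion then defines a family $g_t$ of metrics on $M$.

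To prove $g_t$ has positive Ricci for small $t$, decompose $T_{[p,f]}M$ into three mutually orthogonal summands: $(a)$ the horizontal lift of $T_{\pi(p)}B$ via the connection on $P$; $(b)$ vectors tangent to the fiber $F$ and orthogonal to the $G$-orbit through $f$; $(c)$ vectors tangent to that $G$-orbit. Applying O'Neill's submersion formulas to both $P\times F\to M$ and $M\to B$, each distribution should carry a dominant positive Ricci term. In $(a)$ the dominant term is $\Ricci_B>0$ from hypothesis (1), with $A$-tensor corrections of order $t^2$. In $(b)$ the direction projects, on the principal stratum, to a tangent vector of $F^{\mathrm{reg}}/G$ via the Riemannian submersion $F\to F^{\mathrm{reg}}/G$, and the quotient-metric bound $\Ricci_{F^{\mathrm{reg}}/G}\ge 1$ from hypothesis (3) provides the positive contribution. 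In $(c)$ the bi-invariant metric $t^2 Q$ descends to a normal-homogeneous metric on each principal orbit $G/H$, whose Ricci is positive and of order $t^{-2}$ precisely because $\pi_1(G/H)$ is finite (hypothesis (2), by the standard Nash--Berger criterion for positive Ricci on compact homogeneous spaces).

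The main obstacle will be combining direction $(c)$ with the off-diagonal Ricci components coupling the three summands, and handling the non-principal strata of the $G$-action on the fibers. Compactness of $M$ ensures uniform bounds on the $A$-tensor of the connection on $P$, the second fundamental form of $G$-orbits in $F$, and the isotropy representations along singular strata. Since the dominating orbital term in $(c)$ scales like $t^{-2}$ while all cross-terms remain bounded in $t$, it absorbs every correction once $t$ is chosen small enough, so $\Ricci_{g_t}$ is uniformly positive-definite on the principal stratum. Continuity of $\Ricci_{g_t}$ then extends this positivity to all of $M$, producing the required metric.
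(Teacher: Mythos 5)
Your strategy---realize $M=P\times_G F$, equip $P\times F$ with $(\pi_P^*g_B\oplus t^2Q)\oplus g_F$, and let the quotient metric $g_t$ do the work as $t\to 0$---amounts to proving the hard part of the theorem inline, and that is exactly where it breaks down. The paper argues in two separate steps: it first invokes Theorem~\ref{thm:searle} (Searle--Wiemeler) to convert hypotheses (2) and (3) into a $G$-invariant metric of positive Ricci curvature on $F$ itself, and only then runs the bundle argument (connection metric with totally geodesic fibers, canonical variation, quadratic trick) of Theorem~\ref{conjecture}. Your proposal instead tries to pass directly from $\Ricci_{F^{\mathrm{reg}}/G}\ge 1$ and finiteness of $\pi_1$ of a principal orbit to positivity on the associated bundle, i.e.\ it silently re-proves Theorem~\ref{thm:searle}, and the scaling heuristics offered are not enough to do that.

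Two specific steps fail. First, compactness does \emph{not} give uniform bounds on the second fundamental forms of the $G$-orbits in $F$: approaching a singular orbit the principal orbits degenerate (orbits near a fixed point behave like small distance spheres), their shape operators blow up, and the orbit tensor $P_f$ defined by $g_F(X^*_f,Y^*_f)=Q(P_fX,Y)$ acquires eigenvalues tending to $0$. Hence the identification of the induced metric on an orbit with (approximately) the normal homogeneous metric $t^2Q$, on which your claim that the direction-$(c)$ Ricci term is positive of order $t^{-2}$ rests, is valid only where $t^2$ is much smaller than the smallest eigenvalue of $P_f$; no single choice of $t$ achieves this uniformly on $F^{\mathrm{reg}}$, so you do not obtain a uniform positive lower bound for $\Ricci_{g_t}$ even on the principal stratum. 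Second, the closing step ``continuity of $\Ricci_{g_t}$ extends positivity to all of $M$'' is not valid: continuity only yields the non-strict inequality on the closure of the set where positivity is known, and here positivity has not been established uniformly up to the singular strata---the points over singular orbits are precisely where genuine curvature estimates are required. That analysis is the content of the cited Searle--Wiemeler theorem; either invoke it first (as the paper does) to get a $G$-invariant positive-Ricci metric on $F$ and then apply your associated-bundle/canonical-variation argument, which is exactly Theorem~\ref{conjecture}, or supply the singular-stratum estimates yourself.
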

%	\begin{rem}
		We point out that, in the case of a principal $G$-bundle, the metric resulting from Theorem \ref{horizontal} can be made $G$-invariant\footnote{The authors would like to thank W. Tuschmann for questioning this.}. This can be done by applying the \textit{canonical deformation} to a Kaluza--Klein metric (we refer to the proof of Theorem \ref{conjecture} for details).
%	\end{rem}
	
	We use Theorem \ref{horizontal} to build new examples of manifolds with positive Ricci curvature, which are the main cornerstone of the paper. The method we use naturally extends the range of examples in \cite{searle2015lift} and \cite{SperancaCavenaghiPublished}, since one can use the examples therein as both bases and fibers. The new examples also include bundles whose bases are gradient shrinking Ricci solitons. The detailed account is given in Section \ref{section:examples}. Furthermore, Theorem \ref{horizontal} puts in a common ground the Theorems of Nash and Poor:
	%, since they can be obtained as a corollary of Theorem \ref{horizontal}:
	
	\begin{cor}[\cite{nash1979positive}, Theorem 3.5]
		Let $G/H \hookrightarrow E \to M$ be a fiber bundle with compact structure group $G$ such that $M$ is compact and admits a metric $g$ with $\Ricci_g > 0$. If $G\big/ H$ admits a metric with $\Ricci \ge 1$ (equivalently, if $\pi_1
		(G/ H)$ is finite), then $E$ admits a metric of positive Ricci curvature.
	\end{cor}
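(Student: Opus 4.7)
The plan is to derive the corollary as a direct application of Theorem \ref{horizontal} with fiber $F = G/H$ and base $B = M$; the argument essentially unpacks the theorem in the special case that the structure group acts transitively on the fiber.

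I would verify the three hypotheses in order. Condition (1) is exactly the standing assumption on $M$. For condition (2), the $G$-action on $F = G/H$ is transitive, so $F$ itself is the unique orbit, which is principal, and the hypothesis $|\pi_1(G/H)| < \infty$ is precisely the required finiteness of the fundamental group of a principal orbit. For condition (3), transitivity of the action also forces $F^{\mathrm{reg}} = F$ and $F^{\mathrm{reg}}/G$ to be a single point, so the Ricci lower bound $\Ricci_{F^{\mathrm{reg}}/G} \geq 1$ is vacuously satisfied; all that is needed is some $G$-invariant metric on $F$, for which I would take, for instance, the normal homogeneous metric descending from a bi-invariant metric on $G$.

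The only substantial external ingredient is the parenthetical equivalence ``$G/H$ admits a metric with $\Ricci \geq 1$ iff $\pi_1(G/H)$ is finite''. One direction is Myers' theorem; the other is Nash's classical construction of a $G$-invariant metric with $\Ricci > 0$ on any compact homogeneous space with finite fundamental group, obtained by suitably deforming a normal homogeneous metric and applying a Bochner-type argument on the null space of the Ricci tensor. Rescaling then produces $\Ricci \geq 1$, and by averaging (or by the construction itself) the metric may be taken $G$-invariant.

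I expect the main potential obstacle to be essentially terminological rather than mathematical: one needs to confirm that the notion of ``principal orbit'' in hypothesis (2) of Theorem \ref{horizontal} is the standard one, so that every orbit counts as principal when the action is transitive, and that the Ricci condition in hypothesis (3) is interpreted as vacuous when $F^{\mathrm{reg}}/G$ is a singleton. Once these conventions are fixed, the corollary follows by a direct citation of Theorem \ref{horizontal}.
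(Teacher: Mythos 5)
Your proposal is correct and matches the paper's intent: the corollary is stated precisely as a special case of Theorem \ref{horizontal} with $F=G/H$, where transitivity of the $G$-action makes the principal orbit all of $G/H$ and renders the condition on $F^{\mathrm{reg}}/G$ (a point) vacuous. The verification of the hypotheses, including the Myers/Nash equivalence of $\Ricci\ge 1$ with finiteness of $\pi_1(G/H)$, is exactly what the paper relies on.
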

	
	\begin{cor}[\cite{nash1979positive}, Corollary 3.6]
		Let $S^n \hookrightarrow E \to M$, $n \ge 2,$ be a sphere bundle whose base is compact and has a metric of positive Ricci curvature. Then $E$ admits a metric of positive Ricci curvature.
	\end{cor}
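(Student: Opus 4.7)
The plan is to deduce this directly from Theorem~\ref{horizontal} by recognizing that a sphere bundle (in the classical sense) admits $O(n+1)$ as a compact structure group acting transitively on its fiber $S^n$ via the standard linear representation. So I take $B=M$, $F=S^n$, and $G=O(n+1)$, enlarging the structure group to $O(n+1)$ if necessary.

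With these choices, the three hypotheses of Theorem~\ref{horizontal} become easy to verify. Hypothesis~(1) is exactly the given positivity of Ricci curvature on the base $M$. For~(2), transitivity of the $O(n+1)$-action on $S^n$ means the unique orbit type is $S^n$ itself; since $n\ge 2$, $\pi_1(S^n)$ is trivial, in particular finite. For~(3), equip $F=S^n$ with the standard round metric, which is $O(n+1)$-invariant; because the action is transitive, the orbit space $F^{\mathrm{reg}}/G$ reduces to a single point, so the Ricci condition $\Ricci_{F^{\mathrm{reg}}/G}\ge 1$ holds vacuously on this zero-dimensional manifold. Applying Theorem~\ref{horizontal} then produces a metric of positive Ricci curvature on $E$.

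The only step requiring care, and what I see as the sole potential obstacle, is the interpretation of hypothesis~(3) in the transitive case, where the ``quotient base'' degenerates to a point. This is not a substantive difficulty: the curvature condition is simply vacuous in that degenerate case, consistent with the statement and proof of Theorem~\ref{horizontal} (no horizontal directions on the quotient means no constraint to impose). The content of the corollary is therefore entirely subsumed by Theorem~\ref{horizontal}, together with the standard observation that every sphere bundle can be equipped with $O(n+1)$ as a compact structure group.
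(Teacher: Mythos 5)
Your deduction is correct and is essentially the route the paper intends: the corollary is stated as a direct specialization of Theorem~\ref{horizontal}. Two remarks, though. First, the degenerate-quotient discussion can be bypassed entirely: since $n\ge 2$, the round metric on $S^n$ is $O(n+1)$-invariant with $\Ricci = n-1>0$, so one can feed the bundle directly into Theorem~\ref{conjecture} (whose hypothesis (2) is exactly a $G$-invariant positively Ricci-curved metric on the fiber) and never invoke Theorem~\ref{thm:searle}; this also avoids having to interpret $\Ricci_{F^{\mathrm{reg}}/G}\ge 1$ on a point and the fact that Theorem~\ref{thm:searle}, which drives the proof of Theorem~\ref{horizontal}, is stated for compact \emph{connected} groups, whereas $O(n+1)$ is disconnected. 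Second, your closing claim that every sphere bundle can be equipped with $O(n+1)$ as structure group is too strong: a smooth $S^n$-bundle with structure group $\mathrm{Diff}(S^n)$ need not admit a reduction to the orthogonal group. The corollary (as in Nash) concerns linear sphere bundles, i.e., the compact orthogonal structure group is part of the hypothesis, so no enlargement argument is needed --- nor is one available in general.
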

	
	\begin{cor}[\cite{nash1979positive}, Theorem 3.8]
		Let $F \hookrightarrow E \to M$ be a bundle with compact strcture group $G$ and $M$ be a compact manifold admitting a metric $g$ with $\Ricci_g > 0$. If $G$ possesses finite fundamental group and acts freely on $F$, which is compact and such that $F/G$ admits a metric with $\Ricci \ge 1$, then $E$ admits a metric of positive Ricci curvature.
	\end{cor}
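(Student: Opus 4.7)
The strategy is to deduce the corollary as an immediate application of Theorem \ref{horizontal}; accordingly, the task reduces to verifying the three hypotheses of that theorem from the weaker data at hand. Hypothesis (1) is assumed outright. For hypothesis (2), one observes that when $G$ acts freely on $F$ every orbit is principal and $G$-equivariantly diffeomorphic to $G$ itself; since $G$ is assumed to have finite fundamental group, every principal orbit does too.

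The crux is hypothesis (3): we must produce a $G$-invariant metric on $F$ whose submersion quotient on $F^{\mathrm{reg}}/G=F/G$ realises the prescribed metric $\bar h$ with $\Ricci_{\bar h}\ge 1$. I would proceed by a Kaluza--Klein construction: since $G$ is compact and acts freely, $F\to F/G$ is a smooth principal $G$-bundle, and one may choose a principal connection $\omega\co TF\to\lie g$ together with a bi-invariant inner product $Q$ on $\lie g$ (which exists because $G$ is compact). Setting
\[
\lr{X,Y}_F \;=\; \bar h(d\pi X,\,d\pi Y)\;+\;Q\bigl(\omega(X),\,\omega(Y)\bigr)
\]
defines a metric on $F$; the $G$-equivariance of $\omega$ and the $\Ad(G)$-invariance of $Q$ make it $G$-invariant, and by construction $F\to F/G$ becomes a Riemannian submersion whose base metric is precisely $\bar h$, so hypothesis (3) is met.

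I do not anticipate a serious obstacle in this argument; the only mildly subtle point is that finiteness of $\pi_1(G)$ transfers to finiteness of $\pi_1$ of the orbit $G$ even when $G$ is disconnected, which is automatic because every connected component of a compact Lie group is diffeomorphic to the identity component. With the three hypotheses of Theorem \ref{horizontal} in place, that theorem directly furnishes the desired metric of positive Ricci curvature on $E$.
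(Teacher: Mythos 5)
Your proposal is correct and follows essentially the same route as the paper: the corollary is stated there as an immediate consequence of Theorem \ref{horizontal}, and your verification of its three hypotheses (freeness giving orbits diffeomorphic to $G$ with finite $\pi_1$, and the Kaluza--Klein connection metric realizing the prescribed $\Ricci\ge 1$ metric on $F/G$ as a submersion quotient) supplies exactly the routine details the paper leaves implicit.
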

	
	\begin{cor}[\cite{poor1975some}, Main Theorem]
		Let $M$ and $F$ be compact Riemannian manifolds such that the Ricci curvature of $M$ and the sectional curvature of $F$ are positive. Suppose $\pi : FM \to M$ is a bundle with fiber $F$ for which the structure group $G$ acts on F by isometries. Then $FM$ admits a metric of positive Ricci curvature. %in which the fibers of $\pi$ are totally geodesic and mutually isometric in $FM$. 
	\end{cor}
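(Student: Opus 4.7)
The plan is to realize $M$ as an associated bundle $M = P\times_{G}F$ over a principal $G$-bundle $\pi_{P}\colon P \to B$ (available since $G$ is compact and the bundle has structure group $G$), construct a family of connection-type metrics on $M$ parametrized by scaling constants, and estimate their Ricci curvature via iterated O'Neill submersion formulas. First I would fix a principal connection $\omega$ on $P$ together with a bi-invariant metric $Q$ on $G$ (both available by compactness of $G$), and assemble the Kaluza--Klein metric $g_{P}^{s} = s^{2}\pi_{P}^{*}g_{B} + Q$ on $P$ for a parameter $s>0$, making $\pi_{P}$ into a Riemannian submersion with totally geodesic fibers. I would then place the product metric $g_{P}^{s}\oplus g_{F}$ on $P\times F$, where $g_{F}$ is the metric furnished by hypothesis (3)---possibly first modified along the orbit-tangent directions of the $G$-action (without altering the submersion metric on $F^{\mathrm{reg}}/G$) so that the principal orbit $G/H$ inherits a metric of positive Ricci, which exists by hypothesis (2) and the Bonnet--Myers converse. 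The free, isometric diagonal $G$-quotient then descends this to a one-parameter family $g_{M}^{s}$ on $M$.

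To analyze $\mathrm{Ric}^{g_{M}^{s}}$, I would decompose $T_{[p,f]}M$ at a regular point as an orthogonal sum of three subspaces: the horizontal lift of $T_{\pi(p)}B$ via $\omega$, the orbit-tangent space $T_{f}(G\cdot f)$, and the orbit-normal subspace $N_{f}\subset T_{f}F$, the last being canonically isometric to $T_{[f]}(F^{\mathrm{reg}}/G)$. Using O'Neill's formulas for the composition $P\times F \to M \to B$, I would then verify: (i) horizontal Ricci equals $s^{-2}\pi^{*}\mathrm{Ric}_{B}$ plus corrections of order $s^{-4}$ coming from the curvature $\Omega$ of $\omega$, strictly positive for $s$ large by hypothesis (1); (ii) orbit-normal Ricci equals $\mathrm{Ric}_{F^{\mathrm{reg}}/G}\ge 1$ plus $O(s^{-2})$ corrections from the induced $A$-tensor of $M\to B$, still positive by hypothesis (3); (iii) orbit-tangent Ricci is positive thanks to the preliminary modification of $g_{F}$ along orbits, again up to absorbable corrections controlled by $\Omega$ and $Q$. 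Choosing $s$ sufficiently large then yields a uniform positive lower bound for $\mathrm{Ric}^{g_{M}^{s}}$ on the open dense regular stratum of $M$.

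The main obstacle will be controlling Ricci at points of $M$ whose fiber coordinate lies on a singular orbit of the $G$-action on $F$: there the orbit-tangent/orbit-normal split changes dimension, and the identification of $N_{f}$ with the tangent space to $F^{\mathrm{reg}}/G$ breaks down. I would address this either by working in slice coordinates and deriving the Ricci estimate directly from the slice theorem for the $G$-action on $F$, or by a limiting argument from the regular set together with compactness of $M$ to extract a positive lower bound uniformly over the singular stratum. A secondary technical issue, managed in the same pass, is verifying that the scaling constant $s$ can be chosen large enough uniformly, independently of the fiber point.
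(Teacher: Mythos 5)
Your proposal misidentifies what this corollary actually requires. Under Poor's hypotheses the fiber metric is already all you need: $F$ is compact with $\sec_F>0$, hence $\Ricci_{g_F}>0$, and since the structure group $G$ acts on $F$ by isometries this metric is $G$-invariant. The statement is therefore a direct application of Theorem \ref{conjecture}: build the connection metric on the associated bundle with totally geodesic fibers isometric to $(F,g_F)$ (\cite[Proposition 2.7.1]{gw}), shrink the fibers by the canonical variation, and control the mixed horizontal--vertical Ricci terms by the discriminant (``quadratic trick'') plus a compactness argument. Your route instead invokes ``hypothesis (2)'' (principal orbits with finite $\pi_1$) and ``hypothesis (3)'' ($\Ricci_{F^{\mathrm{reg}}/G}\ge 1$), which are hypotheses of Theorem \ref{horizontal}, not of this corollary, and they do not follow from the corollary's assumptions: for instance $G=S^1$ acting isometrically on a positively curved $F$ typically has circle principal orbits, whose fundamental group is infinite. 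So the orbit-tangent/orbit-normal stratification, the modification of $g_F$ along orbits, and the singular-orbit difficulty you spend your last paragraph on are artifacts of attacking a different (and here unnecessary) statement.

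Even read as an attempt at the general theorem, the hard steps are not supplied. Modifying $g_F$ along orbit directions so that the principal orbit $G/H$ carries positive Ricci as a homogeneous space does not make $\Ricci_F>0$ in orbit directions, since the curvatures of mixed orbit/normal planes are uncontrolled; lifting $\Ricci_{F^{\mathrm{reg}}/G}\ge 1$ and finiteness of $\pi_1$ of a principal orbit to a smooth $G$-invariant metric of positive Ricci on all of $F$, including the singular orbits, is precisely the content of Theorem \ref{thm:searle}, which you would have to cite or prove; ``slice coordinates or a limiting argument from the regular set'' is not a substitute, because estimates on $F^{\mathrm{reg}}$ do not by themselves yield a bound (or even a well-defined smooth invariant metric with the desired properties) on the singular strata. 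Two further gaps: your Ricci verification treats only the three diagonal blocks, but positivity of the Ricci form also requires dominating the horizontal--vertical cross terms (the $(\nabla A)$-terms), which is exactly what the paper's discriminant argument, uniform over unit $X$ and $V$ by compactness, is for; and the fiber metric produced by your diagonal quotient of $g_P^{s}\oplus g_F$ is a Cheeger deformation of $g_F$ rather than $g_F$ itself, so even its positive Ricci curvature would need a separate justification, which the paper's construction with totally geodesic fibers avoids.
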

	
	Theorem \ref{horizontal} follows from the results below, which can be found in the literature:
	
	\begin{thm}\label{conjecture}
		Let $F\hookrightarrow M\stackrel{\pi}{\to}B$ be a bundle with compact connected total space and structure group $G$. Suppose that
		\begin{enumerate}
			\item $B$ has a metric of positive Ricci curvature,
			\item $F$ has a $G$-invariant metric of positive Ricci curvature.
		\end{enumerate}
		Then $M$ admits a metric of positive Ricci curvature.
	\end{thm}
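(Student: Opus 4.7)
The plan is to realize $M$ as an associated bundle to a principal $G$-bundle, equip it with a Kaluza--Klein (connection) metric whose fibers are totally geodesic, and then shrink the fibers via a canonical variation. Since the structure group is $G$, one writes $M=P\times_G F$ with $P\to B$ a principal $G$-bundle. A choice of principal connection $\omega$ on $P$, together with $g_B$ on $B$ and the $G$-invariant metric $g_F$ on $F$, determines a Riemannian metric $g$ on $M$ making $\pi\co M\to B$ a Riemannian submersion whose fibers are isometric to $(F,g_F)$. The $G$-invariance of $g_F$ and the $G$-equivariance of $\omega$ force each fiber to be totally geodesic in $(M,g)$; this is the key structural feature that makes O'Neill's formulas tractable.

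With totally geodesic fibers, I would apply the O'Neill submersion formulas (as in Besse, Chapter 9). For horizontal $X$ and vertical $V$ of unit length these take the schematic form
\[\Ricci_M(X,X)=\Ricci_B(\pi_*X,\pi_*X)-c_1|A_X|^2,\quad \Ricci_M(V,V)=\Ricci_F(V,V)+c_2\sum_i|A^*_{E_i}V|^2,\]
with a mixed term $\Ricci_M(X,V)$ expressible via a divergence of the O'Neill $A$-tensor. The negative $A$-tensor term on horizontal vectors and the presence of a non-zero mixed term obstruct immediate positivity, so I would pass to the canonical variation $g_t$ obtained by rescaling the vertical part of $g$ by $t^2$. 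The scaling behaviour of $A$ yields, for $g_t$-unit vectors, the bounds
\[\Ricci_{g_t}(X,X)=\Ricci_B(\pi_*X,\pi_*X)-O(t^2),\quad \Ricci_{g_t}(V',V')=t^{-2}\Ricci_F(V,V)+O(1),\]
while the mixed term is $O(t)$. As $t\to 0$, the horizontal Ricci is uniformly close to the positive $\Ricci_B$ and the vertical Ricci grows like $t^{-2}$; compactness of $M$ supplies uniform positive lower bounds.

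The main obstacle is the mixed term: since $\Ricci_{g_t}(X,V')$ is in general non-zero, one cannot read off positive definiteness purely from the diagonal. The remedy is standard: use the uniform positive lower bounds $\Ricci_{g_t}(X,X)\ge c>0$ and $\Ricci_{g_t}(V',V')\ge c/t^2$, together with a Cauchy--Schwarz/AM--GM absorption of the $O(t)$ cross term, to conclude that $\Ricci_{g_t}$ is positive definite for all sufficiently small $t$. In the principal-bundle situation alluded to in the remark following Theorem~\ref{horizontal}, the whole construction is $G$-invariant by design, which accounts for the $G$-invariance of the final metric.
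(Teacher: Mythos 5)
Your proposal follows essentially the same route as the paper: a Kaluza--Klein connection metric on $M=P\times_G F$ with totally geodesic fibers, followed by a canonical variation shrinking the fibers, with the only task being to control the mixed Ricci term as the fibers shrink. The paper handles that term via the discriminant of the quadratic $\lambda\mapsto\widetilde\Ricci(X+\lambda V)$ together with a compactness/contradiction argument, which is just the discriminant form of your Cauchy--Schwarz/AM--GM absorption, so the two arguments are essentially identical and your proposal is correct.
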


	\begin{thm}[Theorem A, \cite{searle2015lift}]\label{thm:searle}
		Let $(M,g)$ be a compact connected Riemannian manifold with an isometric  effective action by a compact connected Lie group $G$. If
		\begin{enumerate}
			\item A principal orbit has finite fundamental group,
			\item $\Ricci_{M/G}\geq 1$ for the orbital distance metric,
		\end{enumerate}
		then $M$ admits a $G$--invariant metric of positive Ricci curvature.
	\end{thm}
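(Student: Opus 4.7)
The plan is to prove Theorem \ref{thm:searle} by applying a Cheeger deformation to $g$ and analyzing the Ricci curvature as the principal orbits shrink. Fix a bi-invariant metric $Q$ on $G$ and, for $t>0$, let $g_t$ denote the $G$-invariant metric on $M$ obtained by taking the submersion metric from $(M\times G,\, g + t^{-1}Q)$ along the diagonal $G$-action $(p,a)\mapsto (hp, ah^{-1})$. The horizontal distribution of the submersion $(M^{\mathrm{reg}}, g_t)\to (M^{\mathrm{reg}}/G, \check g)$ is independent of $t$, while each principal orbit $Gp\cong G/H$ inherits an intrinsically shrinking homogeneous metric. I aim to show that $\Ricci_{g_t}$ is positive on $M$ for all sufficiently large $t$.

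On the regular stratum, O'Neill's submersion formulas give
\begin{align*}
\Ricci_{g_t}(X,X) &= \Ricci_{\check g}(\pi_*X,\pi_*X) + \Phi^A_t(X) + \Phi^T_t(X), \\
\Ricci_{g_t}(V,V) &= \Ricci_{Gp, g_t}(V,V) + \Psi^A_t(V) + \Psi^T_t(V),
\end{align*}
for horizontal $X$ and vertical $V$, where the $\Phi$ and $\Psi$ terms encode the corrections from the O'Neill $A$- and $T$-tensors. Hypothesis (2) makes the first horizontal term at least $|X|^2_{g_t}$. For the vertical direction, finiteness of $\pi_1(G/H)$ together with compactness of $G$ forces the normal homogeneous metric induced by $Q$ on $G/H$ to have strictly positive Ricci; under Cheeger rescaling this intrinsic Ricci diverges to $+\infty$ as $t\to\infty$, dominating any polynomially bounded correction. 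After verifying that the $A$- and $T$-tensor corrections remain bounded in $t$ on compact subsets, the mixed entries $\Ricci_{g_t}(X,V)$ are absorbed via Cauchy-Schwarz, yielding $\Ricci_{g_t}>0$ uniformly on $M^{\mathrm{reg}}$ for all sufficiently large $t$.

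It remains to extend positivity from $M^{\mathrm{reg}}$ to the singular stratum. Since $g_t$ is smooth on all of $M$, the Ricci tensor $\Ricci_{g_t}$ is continuous, and a uniform positive lower bound on the open dense set $M^{\mathrm{reg}}$ passes to its closure $M$. Concretely, one would use the slice theorem to trivialize $M$ near a singular orbit as $G\times_K \mathrm{Slice}$, with $K$ the corresponding non-principal isotropy, and verify that the O'Neill tensors extend continuously across strata. The hardest part of the argument is this uniform-in-$t$ control of the O'Neill tensors near the singular orbits: the $A$-tensor reflects the curvature of the horizontal distribution, which degenerates as the orbit dimension drops, while the $T$-tensor encodes the second fundamental form of the orbits and changes type discontinuously at strata. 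Obtaining uniform estimates here is what distinguishes the present action setting from the principal-bundle setting of Nash and Poor.
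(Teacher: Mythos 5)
There is a genuine gap, and it sits exactly where you park it. Note first that the paper does not prove Theorem \ref{thm:searle} at all: it is quoted verbatim as Theorem A of \cite{searle2015lift}, so the comparison must be with the proof given there. Your strategy --- a pure Cheeger deformation $g_t$, O'Neill identities on $M^{\mathrm{reg}}$, intrinsic orbit Ricci blowing up, then ``positivity passes to the closure'' --- establishes positivity only on compact subsets of the regular stratum (and would indeed suffice for free actions with $\pi_1(G)$ finite). But the step from $M^{\mathrm{reg}}$ to $M$ as written is circular: for each fixed $t$ the tensor $\Ricci_{g_t}$ is of course continuous on $M$, so what you need is a single $t$ for which the bound is positive \emph{uniformly} on $M^{\mathrm{reg}}$, and that is precisely what your O'Neill decomposition does not give. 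The identities you invoke hold only on $M^{\mathrm{reg}}$, and their correction terms are not ``polynomially bounded'' near the singular strata: the $A$-tensor and the second fundamental form (mean curvature) of nearby principal orbits blow up as the orbit type degenerates, the hypothesis $\Ricci_{M/G}\ge 1$ only controls the smooth regular part of the quotient, and at a singular orbit the mechanism ``intrinsic orbit Ricci diverges as $t\to\infty$'' disappears altogether because the orbit dimension drops, leaving horizontal directions (pointing into the singular stratum) whose curvature is controlled by neither hypothesis. So the ``hardest part'' you flag in the last sentence is not a verification to be deferred; it is the theorem.

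This is confirmed by the actual proof in \cite{searle2015lift}: Searle and Wilhelm do \emph{not} obtain the result by Cheeger deformation alone. They combine Cheeger deformations with a conformal change (built from $G$-invariant data), introduced precisely to repair the horizontal Ricci curvature near the singular orbits, together with delicate quantitative estimates comparing the deformed metric to the orbital distance metric on $M/G$. In other words, your proposal reproduces the easy half of their argument (the vertical directions and the regular part, where the quadratic-form/Cauchy--Schwarz bookkeeping is the same kind of computation the present paper uses for Theorem \ref{conjecture}, there legitimately, because the fibers are totally geodesic and compactness gives uniformity), but it omits the additional deformation and the uniform estimates across strata that make the statement true for genuinely non-free actions. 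As it stands, the argument proves a weaker statement (e.g.\ the free case, or positivity on compact subsets of $M^{\mathrm{reg}}$), not Theorem \ref{thm:searle}.
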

	
	Theorem \ref{conjecture} is proved using the classical {canonical variation} for fiber bundles and a `{quadratic trick}'. Although this result appears in the literature (see \cite[Theorem 2.7.3, p. 100]{gw}) our correspondences with several specialists revealed that it is  not as popular as it should be, which was one of our motivations for writing this paper. 
	
	Another result, obtained as a direct consequence of the proof of Theorem \ref{conjecture} is the following:
	\begin{cor}\label{thm:trivial}
		Let $(M,g), (F,g_F)$ be compact Riemannian manifolds with positive Ricci curvature. Then, for any smooth function $f: M\to \mathbb{R},$ there is $a\in \bb R$ big enough such that the warped product $M\times_{e^{2f-a}}F$ has positive Ricci curvature.
	\end{cor}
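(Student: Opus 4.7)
The plan is to apply the classical O'Neill formulas for the Ricci curvature of a warped product to the metric $\tilde g := g + e^{2f-a}g_F$ on $M\times F$, mirroring the canonical variation and `quadratic trick' strategy used in the proof of Theorem \ref{conjecture}. Set $\phi := e^{f-a/2}$ so that $\phi^2 = e^{2f-a}$, and let $n := \dim F$. For horizontal $X,Y\in TM$ and vertical $V,W\in TF$, the Ricci tensor of $\tilde g$ splits into three blocks,
\begin{align*}
\widetilde{\Ricci}(X,Y) &= \Ricci_g(X,Y) - \tfrac{n}{\phi}\,\Hess_g(\phi)(X,Y), \\
\widetilde{\Ricci}(X,V) &= 0, \\
\widetilde{\Ricci}(V,W) &= \Ricci_{g_F}(V,W) - g_F(V,W)\bigl[\phi\,\Delta_g\phi + (n-1)|\nabla_g\phi|^2\bigr],
\end{align*}
which can then be analysed separately.

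For the vertical-vertical block, the chain rule yields $\nabla\phi = \phi\nabla f$, $\Delta\phi = \phi(\Delta f + |\nabla f|^2)$, and $|\nabla\phi|^2 = \phi^2|\nabla f|^2$, so the bracketed correction simplifies to $e^{2f-a}(\Delta f + n|\nabla f|^2)$. Since $M$ is compact and $f$ is smooth, this quantity is uniformly $O(e^{-a})$ on $M$ and vanishes as $a\to\infty$. Combining with a positive lower bound $\Ricci_{g_F}\geq\lambda_F g_F$ available by compactness of $F$, one concludes $\widetilde{\Ricci}(V,V) \geq \tfrac{1}{2}\lambda_F\,g_F(V,V) > 0$ for all $a$ sufficiently large; the horizontal-vertical block vanishes identically.

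The horizontal-horizontal block is the delicate one: expanding $\Hess(\phi) = \phi(\Hess(f) + df\otimes df)$ shows that the correction equals the (now $a$-independent) tensor $n(\Hess(f) + df\otimes df) = n\,e^{-f}\Hess(e^f)$, i.e.\ the rescaled Hessian of the single positive function $e^f$. The quadratic trick from the proof of Theorem~\ref{conjecture} enters precisely here: combining the strict positive lower bound on $\Ricci_g$ with the pointwise bound on $\Hess(e^f)$ that holds by compactness of $M$, a completion-of-squares manipulation absorbs this indefinite Hessian into the Ricci lower bound and produces a uniform positive horizontal estimate. The main obstacle is exactly this absorption step: controlling the correction $n\,e^{-f}\Hess(e^f)$ by $\Ricci_g$ uniformly on $M$; once it is in place, the three blocks combine to give $\widetilde{\Ricci} > 0$ on $M\times F$ for all $a$ large enough.
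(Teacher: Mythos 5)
Your setup and two of the three blocks are fine: the warped-product Ricci formulas you quote are correct, the mixed block vanishes, and your vertical estimate is right, since the correction $e^{2f-a}(\Delta f+n|\nabla f|^2)$ is uniformly $O(e^{-a})$ on the compact base. The gap is exactly the step you defer to an unspecified ``completion of squares,'' and it is not repairable as proposed. By your own computation the horizontal block is $\Ricci_g - n(\Hess f + df\otimes df)$, a tensor that is \emph{independent of $a$}: the constant $a$ cancels in $\tfrac{1}{\phi}\Hess\phi$. So letting $a\to\infty$ gives no leverage whatsoever on the horizontal directions, and there is no algebraic manipulation that ``absorbs'' $n(\Hess f+df\otimes df)$ into $\Ricci_g$ unless the pointwise inequality $\Ricci_g > n(\Hess f + df\otimes df)$ already holds --- which is a genuine extra hypothesis on $f$, not a consequence of compactness. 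Concretely, take $M=F=S^2$ round of radius $1$ (so $\Ricci_g=g$, $n=2$) and $f=C\cos\theta$ with $C$ large: at the south pole $\Hess f = Cg$ and $df=0$, so the horizontal Ricci of the warped metric there equals $(1-2C)g<0$ for \emph{every} $a$. Hence the ``main obstacle'' you name cannot be overcome within your scheme; the argument is incomplete at its decisive point.

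Note also that this is a different route from the paper's, which gives no standalone proof of Corollary~\ref{thm:trivial} but asserts it as a direct consequence of the proof of Theorem~\ref{conjecture} (canonical variation plus the discriminant trick). In the warped-product situation the canonical variation only shifts the constant $a$, the fibers are totally umbilic rather than totally geodesic (so Proposition~\ref{prop:curvatures} does not apply verbatim), and the mixed terms that the quadratic trick is designed to control vanish identically --- the only delicate block is precisely the $a$-independent horizontal one. So to make your proof (and the statement as you read it) work you would need an additional assumption such as $\Ricci_g > n\,e^{-f}\Hess_g(e^f)$ (automatic for $f$ constant, or under a Bakry--\'Emery-type condition as in the soliton examples), and you should flag this explicitly rather than leave the absorption step as an unproved claim.
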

	
	A complete Riemannian metric $g$ on a smooth manifold $M$ is called \textit{Ricci soliton} if there exists a smooth vector field $X$ on $M$ and a constant $\rho \in \mathbb{R}$ such that the following equation holds
	\begin{equation}\label{eq:defi} \Ricci(g) + \dfrac{1}{2}\mathcal{L}_Xg = \rho g,\end{equation}
	where $\mathcal{L}$ denotes the Lie derivative.	
	If $X = \nabla f$ where $f$ is some smooth function, the equation \eqref{eq:defi} becomes
	\begin{equation}\label{eq:gsrs}
	\Ricci(g) + \Hess f = \rho g,
	\end{equation}
	and the Ricci soltion is called \textit{gradient Ricci soliton}. 
	
	Ricci solitons are named in the literature according to the sign of $\rho.$ A Ricci soliton is called \textit{steady} if $\rho = 0,$ \textit{expanding} if $\rho < 0$, and \textit{shrinking} if $\rho > 0.$
	
	Ricci solitons were introduced by Hamilton in \cite{hamiltons} as the particular solutions for the Ricci flow $\partial g /\partial t = -2\Ricci(g)$ whose metric evolves only by diffeomorphisms. Their importance was further demonstrated in the works of Perelman \cite{perelman2002entropy,perelmeme} where his classification of $3$--dimensional gradient shrinking Ricci solitons led to the solution of the Poincaré conjecture, and in \cite{sesum}, where it is proved that gradient Ricci solitons are precisely the blow--up limits of the Ricci flow.
	
	Compact Shrinking Ricci Solitons (SRS) with positive Ricci curvature were extensively studied in the literature (see \cite{cao2009recent,dancer}). In particular, using warped product metrics and Corollary \ref{thm:trivial} one can use such Ricci solitons to build new examples of manifolds of positive Ricci curvature (see Section \ref{section:examples}).
	
	\section{Proof of the main theorems}
	
	Let $\pi : (F,g_F)\hookrightarrow (M,g) \to (B,g_B)$ be a Riemannian submersion. The \textit{canonical variation} $\widetilde g$ of the metric $g$ is defined as:
	\begin{equation}
	\widetilde g(X+V,Y+W) := e^{2t}g(V,W) + g(X,Y), ~t \in (-\infty,\infty),
	\end{equation}
	$\forall X,Y \in \mathcal{H},\forall  V,W \in \mathcal{V},$ where $\mathcal{H}, \mathcal{V}$ denote the horizontal and vertical space of the submersion $\pi$. 
	
	\begin{prop}\label{prop:curvatures}
		Let $\pi : F \hookrightarrow (M,g) \to B$ be a Riemannian submersion with totally geodesic fibers. Let $\widetilde K,~K,~K_B,~K_F$ denote the non--reduced sectional curvatures of $\widetilde g,~g_B,~g_F$, the canonical variation metric of $g,$ the submersion metric on $B$, and the metric on $F$, respectively. Then, if $X,Y,Z \in \mathcal{H},$ and  $V,W \in \mathcal{V},$
		\begin{enumerate}
			\item $\widetilde K(X,Y) = K_B(\pi_{\ast}X,\pi_{\ast}Y)(1-e^{2t}) + e^{2t}K(X,Y),$\\
			\item $\widetilde K(X,V) = e^{4t}\|A^*_XV\|^2,$\\
			\item $\widetilde K(V,W) = e^{2t}K(V,W),$\\
			\item $\widetilde R(X,Y,Z,W) = e^{2t}g((\nabla_XA)_YZ,W).$
		\end{enumerate}
	\end{prop}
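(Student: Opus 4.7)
My plan is to apply O'Neill's classical curvature formulas for a Riemannian submersion directly to $(M, \widetilde g) \to B$ and compare, term by term, with the formulas for $(M, g)$, tracking the powers of $e^{2t}$ that arise. Two preliminary observations set the stage. First, the horizontal and vertical distributions for $\widetilde g$ coincide with those for $g$, since a purely vertical rescaling preserves orthogonality; in particular the same $\mathcal{H}$--$\mathcal{V}$ splitting is available for $\widetilde g$. Second, the fibers remain totally geodesic in $(M, \widetilde g)$, because the induced fiber metric is $e^{2t} g_F$, a constant rescaling with the same (vanishing) second fundamental form. Thus O'Neill's $T$-tensor still vanishes and only the $A$-tensor requires attention.

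Next I would track how $A$ changes under the canonical variation. A short Koszul calculation---using that $[X, Y]$ is metric-independent and that $g(X, Y)$ is constant along fibers for basic horizontal $X, Y$---yields $\widetilde A_X Y = A_X Y$ as a vertical vector field, hence $\widetilde A^*_X V = e^{2t} A^*_X V$. Taking $\widetilde g$-norms gives $\|\widetilde A_X Y\|_{\widetilde g}^2 = e^{2t}\|A_X Y\|^2$ and $\|\widetilde A^*_X V\|_{\widetilde g}^2 = e^{4t}\|A^*_X V\|^2$, which are the only scalings that will appear in the final formulas.

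The four items then follow by substitution into the classical O'Neill identities. For (1), one has $\widetilde K(X, Y) = K_B(\pi_{\ast} X, \pi_{\ast} Y) - 3\|\widetilde A_X Y\|_{\widetilde g}^2$; rewriting $3\|A_X Y\|^2 = K_B(\pi_{\ast} X, \pi_{\ast} Y) - K(X, Y)$ produces the stated convex combination. Item (2) combines the totally geodesic formula $K(X, V) = \|A^*_X V\|^2$ with the $e^{4t}$ scaling. For (3), totally geodesic fibers give $K(V, W) = K_F(V, W)$, and a constant rescaling of the fiber metric by $e^{2t}$ multiplies the $(0,4)$ Riemann tensor by $e^{2t}$. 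For (4), I would invoke the O'Neill identity $R(X, Y, Z, W) = g((\nabla_X A)_Y Z, W)$ (for horizontal $X, Y, Z$ and vertical $W$, with totally geodesic fibers), observe that $\widetilde \nabla A$ agrees with $\nabla A$ on the relevant vertical output, and pick up the single factor of $e^{2t}$ from pairing two vertical vectors with $\widetilde g$ at the end. The only delicate step throughout is the bookkeeping of $A$ versus $A^*$ and $g$-norms versus $\widetilde g$-norms, and checking that $\widetilde\nabla$ restricted to the required directions agrees with $\nabla$ on the relevant vertical components; once these identifications are made, each item reduces to a one-line substitution.
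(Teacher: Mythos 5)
Your proposal is correct and essentially coincides with the argument the paper relies on: the paper's proof is just a citation to Gromoll--Walschap (Section 2.1.3), where these formulas are derived exactly as you do --- the canonical variation preserves the $\mathcal{H}$--$\mathcal{V}$ splitting and the totally geodesic fibers, $\widetilde A_XY=A_XY$ while $\widetilde A^*_XV=e^{2t}A^*_XV$, and one substitutes into O'Neill's identities. The two steps you leave as observations do check out by the Koszul formula: one gets $\widetilde T=e^{2t}T=0$, and the vertical component of $(\widetilde\nabla_X\widetilde A)_YZ$ agrees with that of $(\nabla_XA)_YZ$ (the rescaled pieces $\widetilde A_{X}(A_YZ)$, $\widetilde A_Y(A_XZ)$ are horizontal and drop out), so in item (4) the only factor of $e^{2t}$ indeed comes from the final vertical pairing.
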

	\begin{proof}
		See \cite[Section 2.1.3, p. 52]{gw}.
	\end{proof}
	
	We now prove Theorem \ref{conjecture}.
	\begin{proof}[Proof of Theorem \ref{conjecture}]
		The main idea is to control the curvature using the discriminant of a quadratic polynomial (this is sometimes the `quadratick trick'), while applying a {canonical variation}. Firstly, consider on $M$ a metric $g$ such that the fibers $F$ are totally geodesic (see \cite[proposition 2.7.1, p. 97]{gw}) and let $\widetilde g$ be a canonical variation of the metric $g$. 
		
		Take $X \in \mathcal{H}, V \in \mathcal{V}, \|X\|_{\widetilde g} = \|V\|_{\widetilde g} = 1$ and let $\lambda\in \mathbb{R}.$ Define the polynomial $p(\lambda) := \widetilde \Ricci(X+\lambda V).$ Take a $g$--orthonormal basis  $\{\widetilde e_i := e_i\}_{i=1}^k$ for $\mathcal{H}$ and a $\widetilde g$--orthonormal basis $\{\widetilde e_j := e^{-t}e_j\}_{j=k+1}^n$ for $\mathcal{V}.$ Then,
		\[\widetilde R(X+\lambda V,\widetilde e_r,\widetilde e_r,X+\lambda V) = \widetilde K(X,\widetilde e_r) + \lambda^2\widetilde K(V,\widetilde e_r) + 2\lambda \widetilde R(X,\widetilde e_r,\widetilde e_r,V),\]
		$\forall r\in \{1,\ldots,n\}.$
		
		Summing in $i,j,$ and using the formulae on Proposition \ref{prop:curvatures}, by recalling that the fibers are totally geodesic, one has:
		\begin{multline}
		p(\lambda) = (1-e^{2t})\Ricci_B(X) + e^{2t}\Ricci^{\mathcal{H}}(X)+e^{2t}\sum_{j=k+1}^{n}\|A_X^*e_j\|^2 \\+\lambda^2\left\{e^{2t}\sum_{i=1}^k\|A_{e_i}V^*\|^2 + \Ricci_F(V)\right\}+ 2\lambda e^{2t}\sum_{i=1}^kg\left((\nabla_{e_i}A)_Xe_i,V\right).
		\end{multline}
		The discriminant $\Delta_t(X,V)$ of the last expression, when seen as a second order polynomial, is given by:
		\begin{multline*}\dfrac{1}{4}\Delta_t(X,V) = e^{4t}\left(\sum_{i=1}^kg\left((\nabla_{e_i}A)Xe_i,V\right)\right)^2-\\
		\left((1-e^{2t})\Ricci_B(X) + e^{2t}\Ricci^{\mathcal{H}}(X)+e^{2t}\sum_{j=k+1}^{n}\|A_X^*e_j\|^2\right)\\
		\left(e^{2t}\sum_{i=1}^k\|A_{e_i}V^*\|^2 + \Ricci_F(V)\right)
		\end{multline*}

		We claim that there exist $t < 0$ such that, for every $X\in \mathcal{H},V \in \mathcal{V}, \|X\| = \|V\| = 1,$ one has
		$\Delta_t(X,V) < 0.$ In fact, suppose on the contrary that, for each $n \in \mathbb{N}$, there are $X_n,V_n$ as in the hypotheses satisfying $\Delta_{-n}(X_n,V_n) \geq 0.$ By passing to a subsequence, if necessary, when $n \to \infty$ one gets:
		\[-\Ricci_{F}(V)\Ricci_{B}(X) \geq 0,\]
		where $V_n \to V, X_n \to X$. The proof is concluded since $\Ricci_{F}(V)\Ricci_{B}(X) > 0$ by hypotheses.
	\end{proof}
	
	\begin{proof}[Proof of Theorem \ref{horizontal}]
		By the assumption that $F/G$ admits a metric of positive Ricci curvature and the orbits of $F$ on $G$ have finite fundamental group, by Theorem \ref{thm:searle}, $F$ admits a $G$-invariant metric of positive Ricci curvature. Theorem \ref{conjecture} now shows that $M$ admits a metric of positive Ricci curvature.
	\end{proof}
	
	\section{Examples}
	\label{section:examples}
	
	Let $B,F$ be manifolds satisfying the hypotheses in Theorem \ref{thm:searle}. Given a principal $G$--bundle
	\[\pi : G \hookrightarrow P \to B,\]
	Theorem \ref{horizontal} guarantees the existence of a metric with positive Ricci curvature on the associated bundle $\mathrm{pr} : F\hookrightarrow P\times_G F\to B$ (where $P\times_G F$ stands for the quotient of $P\times F$ by the $G$-action $g(p,f)=(pg^{-1},gf)$). Here we combine this idea with the constructions in \cite{SperancaCavenaghiPublished} to produce new examples of bundles with positive Ricci curvature. With this aim, we recall the construction in \cite{SperancaCavenaghiPublished}.
	
	Consider a compact connected principal bundle $G\hookrightarrow P \to M$ with a principal action $\bullet.$ Assume that there is another action on $P$, which we denote by $\star$,  that commutes with $\bullet$. This makes $P$ a $G\times G$--manifold. If one assumes that $\star$ is free, one gets a \textit{$\star$-diagram} of bundles:
	\begin{equation}\label{eq:CD}
	\begin{xy}\xymatrix{& G\ar@{..}[d]^{\bullet} & \\ G\ar@{..}[r]^{\star} & P\ar[d]^{\pi}\ar[r]^{\pi'} &M'\\ &M&}\end{xy}
	\end{equation}
	In \ref{eq:CD}, $M$ is the quotient of $P$ by the $\bullet$-action and $M'$ is the quotient of $P$ by the $\star$-action.
	
	Since $\bullet$ and $\star$ commute, $\bullet$ descends to an action on $M'$ and $\star$ descends to an action on $M$. We denote the orbit spaces of these actions by $M'/\bullet$ and $M/\star$, respectively. Corollary 5.2 in \cite{SperancaCavenaghiPublished} implies that one can choose a Riemannian metric $g'$ on $M'$ such that the orbit spaces $M/\star$ and $M'/\bullet$ are isometric, as metric spaces. Furthermore, it can be shown that the orbits of the $\star$-action on $M$ have finite fundamental group if, and only if, the orbits of the $\bullet$-action does \cite[Theorem 6.4]{SperancaCavenaghiPublished}. Therefore, $M$ has a $\star$-invariant metric satisfying the hypotheses in Theorem \ref{thm:searle} if, and only if, $M'$ has a $\bullet$--invariant metric satisfying the same hypotheses. We gather this information in a Proposition:
	\begin{prop}\label{prop:ajuda}
		Let $P,M, M'$ be as in diagram \eqref{eq:CD}. Then there exists a $\star$-invariant metric on $M$ satisfying the hypotheses of Theorem \ref{thm:searle} if, and only if, $M$ has a $\bullet$-invariant metric satisfying the same hypotheses. 
	\end{prop}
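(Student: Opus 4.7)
The plan is to assemble the two cited results of \cite{SperancaCavenaghiPublished}---Corollary 5.2 and Theorem 6.4---each of which matches exactly one of the two hypotheses appearing in Theorem \ref{thm:searle}. Because the $\star$-diagram \eqref{eq:CD} is symmetric under the interchange of the roles of $\bullet$ and $\star$ (and, correspondingly, of $M$ and $M'$), it suffices to treat one direction.

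Suppose $M$ carries a $\star$-invariant Riemannian metric $g_M$ for which a principal $\star$-orbit has finite fundamental group and $\Ricci_{M/\star}\ge 1$ for the orbital distance metric. First, by Corollary 5.2 of \cite{SperancaCavenaghiPublished} there is a $\bullet$-invariant Riemannian metric $g'$ on $M'$ such that the orbit spaces $M/\star$ and $M'/\bullet$ are isometric as metric spaces. The orbital distance metric is a genuine Riemannian metric on the open dense principal stratum, and the construction of Corollary 5.2 descends from a single metric on $P$, so the isometry restricts to a smooth Riemannian isometry on the principal strata; the bound $\Ricci\ge 1$ is therefore inherited by $M'/\bullet$. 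Next, Theorem 6.4 of \cite{SperancaCavenaghiPublished} asserts that a principal $\star$-orbit in $M$ has finite fundamental group if and only if a principal $\bullet$-orbit in $M'$ does, so the orbit-fundamental-group hypothesis of Theorem \ref{thm:searle} also transfers. The reverse implication is obtained by exchanging $\bullet\leftrightarrow\star$ and $M\leftrightarrow M'$ in the argument above.

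The only genuine subtlety, were these two results not available, would be the construction of the metric $g'$ (via a $(G\times G)$-invariant metric on $P$ whose horizontal distributions descend compatibly under both quotients) together with the verification that the resulting orbit-space isometry is smooth Riemannian on the principal stratum, so that a curvature inequality rather than a mere metric-space property transfers. Since both points are settled in \cite{SperancaCavenaghiPublished}, the proof reduces to the brief assembly described above.
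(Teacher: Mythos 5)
Your proof is correct and takes essentially the same route as the paper: it assembles Corollary 5.2 (the orbit spaces $M/\star$ and $M'/\bullet$ are isometric for a suitable choice of metric) and Theorem 6.4 (finiteness of the fundamental group of a principal orbit transfers between the two actions) of \cite{SperancaCavenaghiPublished}, which is exactly the argument given in the paragraph preceding the proposition. Your additional remark that the metric-space isometry is a Riemannian isometry on the principal strata, so that $\Ricci\ge 1$ genuinely transfers, only makes explicit a detail the paper leaves implicit, and you correctly read the second ``$M$'' in the statement as $M'$ (a typo in the paper).
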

	
	The example that inspired the constructions in \cite{SperancaCavenaghiPublished} was the Gromoll--Meyer $\star$-diagram introduced by C. Durán in \cite{duran2001pointed}.
%	\begin{example}
		Recall that the Lie group $Sp(2)$ is defined by
		\begin{equation}\label{eq:Sp2}
		Sp(2) = \left\{\begin{pmatrix} a & c \\b & d\end{pmatrix}\in S^7\times S^7~ \Big| ~a\bar{b} + c\bar{d} = 0\right\},
		\end{equation} 
		where $a,b,c,d\in \bb H$, the set of quaternions with usual conjugation and multiplication, and $S^7\subset \bb H^2$ is the unitary sphere with respect to the norm $\|(a,b)^T\|^2=\bar aa+\bar bb$.
		The projection $\pi:Sp(2)\to S^7$ of an element of $Sp(2)$ onto its first row is a principal $S^3$--bundle with principal action:
		\begin{equation}\label{eq:GMprincipalaction}
		\begin{pmatrix} 
		a & c \\
		b & d 
		\end{pmatrix}\bar q = \begin{pmatrix}
		a & c\\
		b\overline{q} & d\overline{q}
		\end{pmatrix}.
		\end{equation}
		Gromoll--Meyer \cite{gromoll1974exotic} introduced the $\star$--action
		\begin{equation}\label{eq:GMstaraction}
		q \begin{pmatrix} 
		a & c \\
		b & d 
		\end{pmatrix} = \begin{pmatrix} 
		qa\overline{q} & qc\overline{q} \\
		qb & qd 
		\end{pmatrix},
		\end{equation}
		whose quotient is an exotic sphere of dimension $7$, concluding their celebrated result on the existence of an exotic sphere with non-negative sectional curvature.
		The corresponding action on $S^7$ can be read from the first row of \eqref{eq:GMprincipalaction}:
		\begin{equation}\label{eq:GMactiondown}
		q\cdot\begin{pmatrix}x\\ y	\end{pmatrix}=\begin{pmatrix}qx\bar q\\ qy\bar q\end{pmatrix}
		\end{equation}
		The actions given by \eqref{eq:GMprincipalaction}, \eqref{eq:GMstaraction} produce the following star--diagram
		\begin{equation}\label{cd:duran}
		\begin{xy}\xymatrix{& S^3\ar@{.}[d]^{\bullet} & \\ S^3\ar@{..}[r]^{\star} &Sp(2)\ar[d]^{\pi}\ar[r]^{\pi'} &\Sigma^7\\ &S^7&}\end{xy}
		\end{equation}
		which was used in \cite{duran2001pointed} to geometrically realize an explicit clutching diffeomorphism $\hat b : S^6\to S^6$ for $\Sigma^7=Sp(2)/\star$.
%	\end{example}
	
	%such that $\Ricci_{M/G}\geq 1$ then there exists a $\bullet$--invariant metric on $M'$ with the same property.
	
	The star--diagram construction was used in \cite[Theorem 1.1]{SperancaCavenaghiPublished} to produce several manifolds satisfying the hypotheses in Theorem \ref{thm:searle}, which we recall here: 
	
	\begin{thm}\label{thm:llohann}
		Let $\Sigma^7$ and $\Sigma^8$ be any homotopy spheres of dimensions 7 and 8, respectively; $\Sigma^{10}$ be any homotopy 10-sphere which bounds a spin manifold;  $\Sigma^{4m{+}1},\Sigma^{8m+5}$ be Kervaire spheres of dimensions $4m{+}1,8m{+}5$, respectively. Then, the following manifolds admit an explicit realization as in diagram \eqref{eq:CD} and satisfy the hypotheses of Theorem \ref{thm:searle}:
		\begin{enumerate}[$(i)$]
			\item $M^7\#\Sigma^7$, where $M^7$ is any 3-sphere bundle over $S^4$ 
			\item $M^8\#\Sigma^8$, where $M^8$ is either a 3-sphere bundle over $S^5$ or a 4-sphere bundle over $S^4$
			\item $M^{10}\#\Sigma^{10}$, where:
			\begin{enumerate}[$(a)$]
				\item $M^{10}=M^8\times S^2$ with  $M^8$  as in item $(ii)$
				\item  $M^{10}$ is any 3-sphere bundle over $S^7$, 5-sphere bundle over $S^5$ or 6-sphere bundle over $S^4$				
			\end{enumerate}
			\item $M^{4m+1}\#\Sigma^{4m+1}$ where \label{item:5} 
			\begin{enumerate}[$(a)$]
				\item $S^{2m}\hookrightarrow M^{4m+1}\to S^{2m+1}$ is the sphere bundle associated to any  multiple\footnote{\label{footnote1}That is, a bundle whose transition function $\alpha:S^{n-1}\to G$ is a multiple of $\tau_{2m}:S^{2m}\to O(2m+1)$, $\tau_m^\bb C:S^{2m}\to U(m)$ or $\tau_m^\bb H:S^{4m+2}\to Sp(m)$, for $G=O(2m),U(m+1)$ or $Sp(m)$, the transition functions of the orthonormal frame bundle and its reductions, respectively.} of $O(2m{+}1)\hookrightarrow O(2m{+}2)\to S^{2m+1}$, the frame bundle of $S^{2m+1}$
				\item $\bb C \rm P^{m}\hookrightarrow M^{4m+1}\to S^{2m+1}$ is the $\bb C\rm P^m$-bundle associated to any  multiple of the bundle of unitary frames $U(m)\hookrightarrow U(m+1)\to S^{2m+1}$
				\item $M^{4m+1}=\frac{U(m+2)}{SU(2)\times U(m)}$	
			\end{enumerate}
			\item $(M^{8r+k}\times N^{5-k})\#\Sigma^{8r+5}$ where $N^{5-k}$ is any manifold with positive Ricci curvature and
			\begin{enumerate}[$(a)$]
				\item $S^{4r+k-1}\hookrightarrow M^{8r+k}\to S^{4r+1}$ is the $k$-th  suspension of the unitary tangent $S^{4r-1}\hookrightarrow T_1S^{4r{+}1}\to S^{4r+1}$,
				\item for $k=1$, $\bb H\rm P^{m}\hookrightarrow M^{8m+1}\to S^{4m+1}$ is the $\bb H\rm P^m$-bundle associated to any  multiple of $Sp(m)\hookrightarrow Sp(m+1)\to S^{4m+1}$
				\item for $k=0$, $M=\frac{Sp(m+2)}{Sp(2)\times Sp(m)}$
				\item for $k=1$, $M=M^{8m+1}$ is as in item  $(\ref{item:5})$
			\end{enumerate}
		\end{enumerate}	
	\end{thm}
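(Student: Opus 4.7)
The strategy is to proceed case by case, producing for each listed manifold an explicit $\star$-diagram as in \eqref{eq:CD} and then applying Proposition \ref{prop:ajuda}. In each case one side of the diagram will carry the target manifold $M^n\#\Sigma^n$, while the other side will carry a classical bundle $M$ on which the hypotheses of Theorem \ref{thm:searle} are transparent; Proposition \ref{prop:ajuda} then transfers those hypotheses back to the target.

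The model is the Dur\'an diagram \eqref{cd:duran}, in which $\Sigma^7=Sp(2)/\star$ and $S^7=Sp(2)/\bullet$. For the remaining cases I would start from a principal $G$-bundle $G\hookrightarrow P\to B$ whose base is a sphere (or, in items (iv)(c) and (v)(c), a homogeneous space of classical groups) and whose transition function realizes the desired bundle class --- including the generators $\tau_{2m}$, $\tau_m^\bb C$ or $\tau_m^\bb H$ for the Kervaire families in items (iv) and (v). On $P$ one then constructs a second, commuting action $\star$ of Gromoll--Meyer type, combining conjugation on $G$ with translation by the clutching function. The $\bullet$-quotient is a sphere-, $\bb C\rm P^m$- or $\bb H\rm P^m$-bundle over $B$, while the $\star$-quotient is identified with $M\#\Sigma$ by pulling back the Dur\'an diffeomorphism along the appropriate equatorial inclusion of spheres. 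Items (iii)(a) and (v) additionally require a direct product with $S^2$ or $N^{5-k}$, which is accommodated by extending the diagram trivially on the extra factor.

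With the diagrams in place, I would verify Theorem \ref{thm:searle}'s hypotheses on the $\bullet$-side, where the structure is most explicit: the principal orbits are homogeneous spaces $G/H$, and since the relevant $P$ is simply connected, finite fundamental group of those orbits follows from the long exact sequence of the fibration. The orbit space is either a sphere or a normal homogeneous biquotient admitting, after rescaling, a metric with $\Ricci\ge 1$. Proposition \ref{prop:ajuda} then promotes these properties to the target side carrying $M^n\#\Sigma^n$.

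The main obstacle is the case-by-case identification of $P/\star$ with $M^n\#\Sigma^n$ carrying the correct exotic class --- in particular, producing the Kervaire-sphere analogue of the Dur\'an clutching diffeomorphism in each relevant dimension and exhibiting it as the twist induced by the $\star$-action. This identification is the technical core of \cite{SperancaCavenaghiPublished}, and the role of the plan above is to organize its output within the framework supplied by Proposition \ref{prop:ajuda} so that the Searle--Wilhelm lift of Theorem \ref{thm:searle} can be invoked uniformly across the listed families.
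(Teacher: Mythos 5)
The paper does not actually prove this statement: it is recalled verbatim as Theorem~1.1 of \cite{SperancaCavenaghiPublished}, so the ``proof'' in this paper is the citation itself. Your outline is essentially a summary of that reference's architecture ($\star$-diagrams as in \eqref{eq:CD}, transfer of the hypotheses of Theorem \ref{thm:searle} between the two quotients via Proposition \ref{prop:ajuda}, then the Searle--Wilhelm lift), and you correctly identify that the case-by-case identification of $P/\star$ with $M^n\#\Sigma^n$ (the Kervaire-sphere analogues of Dur\'an's clutching construction included) is the technical core. But since you defer exactly that core back to \cite{SperancaCavenaghiPublished}, your proposal is not an independent proof so much as an organized restatement of the citation; to that extent it matches what the paper does, but it does not supply the missing content.

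Two of the details you do attempt to fill in are not correct as stated. First, the hypotheses of Theorem \ref{thm:searle} concern the principal orbits and the orbital distance metric of the action \emph{descended to} $M$ (or $M'$), not orbits inside the simply connected total space $P$; finiteness of $\pi_1$ of a principal orbit of the descended action does not follow from $\pi_1(P)=0$ via ``the long exact sequence of the fibration''. In this framework the relevant statement is \cite[Theorem 6.4]{SperancaCavenaghiPublished}, that the $\star$-orbits on $M$ have finite fundamental group if and only if the $\bullet$-orbits on $M'$ do, and one must still verify the condition on one side for each family. Second, the orbit spaces arising in the listed families (for instance a $3$-sphere bundle over $S^4$ modulo the lifted conjugation-type $S^3$-action) are in general singular quotients with nontrivial isotropy strata, not round spheres or normal homogeneous biquotients, so $\Ricci\ge 1$ ``after rescaling'' is not transparent; in the reference this is obtained from explicit invariant metrics and deformation arguments. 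So either you invoke \cite[Theorem 1.1]{SperancaCavenaghiPublished} outright, as the paper does, in which case there is nothing left to prove, or you must genuinely carry out the construction of the commuting actions, the identification of $P/\star$ with $M\#\Sigma$, and the verification of the Searle--Wilhelm hypotheses for each listed family --- none of which your outline does.
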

	
	Now we use the previous discussion and results to produce new examples of manifolds with positive Ricci curvature. In what follows, we denote by $F\hookrightarrow P\times_G F\to B$ the associated bundle to $G\hookrightarrow P\to B$ with fiber $F$.
	
	\subsection{Bundles associated to $\star$-diagrams}
	%    \begin{enumerate}
	%        \item 
	Consider $P,M,M',G$ as in diagram \eqref{eq:CD} and assume that $M$ satisfies the hypotheses in Theorem \ref{thm:searle}. Then, according to Proposition \ref{prop:ajuda},  $M'$ satisfies the same hypotheses. Therefore Theorem \ref{horizontal} guarantees the existence of metrics of positive Ricci curvature in the following bundles:
	\begin{enumerate}
		\item The associated bundle $M \hookrightarrow P\times_GM \to M$ to $\pi : P\to M,$
		\item The associated bundle $M' \hookrightarrow P\times_GM' \to M$ to $\pi : P \to M,$
		\item The associated bundle $M \hookrightarrow P\times_GM' \to M'$ to $\pi' : P\to M',$
		\item  The associated bundle $M' \hookrightarrow P\times_GM' \to M'$ to $\pi' : P\to M'$.
	\end{enumerate}
	
	Here we are considering $M,M'$ with the $G$-actions descending from $\star,\bullet$, respectively. Note that $M'$ can be taken as any of the manifolds given by Theorem \ref{thm:llohann} and $M$ as its ``counterpart'' in diagram \eqref{eq:CD}. 
	%	(i.e, as the other base manifold that completes the diagram \eqref{eq:CD}). 
	
	%	\item 
	\subsubsection{}	Now we explore a little further the construction in the previous paragraph.
	To begin with, consider   Duran's $\star$-diagram diagram \eqref{cd:duran}, i.e., the $\star$-diagram  with $G = S^3, P = Sp(2)$, $M = S^7, M' = \Sigma^7$. Let $B=S^7,\Sigma$ endowed with its respective action and equipped with an invariant  metric with positive Ricci curvature. Consider its respective principal bundle 
	%	hypotheses in Theorem \ref{thm:searle} and consider a principal bundle  
	$\mathrm{pr} : S^3\hookrightarrow Sp(2) \to B.$ 
	\begin{enumerate}[(i)]
		\item Let $Sp(2) \hookrightarrow Sp(2)\times_{S^3}Sp(2) \to B$ be the associated bundle to $\mathrm{pr} : S^3 \hookrightarrow Sp(2) \to B$ . By Theorem \ref{horizontal}, this bundle admits a metric of positive Ricci curvature.
		\item 
		%We can assume, for instance, that $B$ is either $S^7$ or $\Sigma^7$, with $P=Sp(2)$  equipped with the principal $S^3$-action from $\eqref{cd:duran}$. 
		%Thus, , which become $S^3$-manifolds when equipped with the actions induced by $\star$ and $\bullet$, respectively. 
		%
		%  In both cases, the $S^3$--manifold $B$ can be regarded as the fiber of associated bundles to the principal bundles $\pi : Sp(2) \to S^7$ or $\pi':  Sp(2) \to \Sigma^7$ described in diagram \eqref{cd:duran}, producing new examples of fiber bundles with positive Ricci curvature.
		As above, the following associated bundles admit metrics of positive Ricci curvature:
		\begin{enumerate}[(a)]
			\item  $S^7 \hookrightarrow  Sp(2)\times_{S^3}S^7 \to \Sigma^7$;
			\item  $\Sigma^7 \hookrightarrow Sp(2)\times_{S^3}\Sigma^7 \to \Sigma^7$;
			\item  $\Sigma^7\hookrightarrow Sp(2)\times_{S^3}\Sigma^7\to S^7$;
			\item  $S^7 \hookrightarrow Sp(2)\times_{S^3}S^7 \to S^7$.
		\end{enumerate}

		\item
		Based on the descriptions given by the items $(i), (ii),$ one can build, for example, the following ladder of bundles with positive Ricci curvature:
		\begin{equation}\label{cd:stairs}
		\xymatrix@C=1.2em@R=1.4em{
			Sp(2) \ar[d] &\\
			Sp(2)\times_{S^3}Sp(2) \ar[d] \\
			S^7  \ar[r] &   Sp(2)\times_{S^3}S^7 \ar[d]\\
			&  \Sigma^7 \ar[r] &Sp(2)\times_{S^3}\Sigma^7 \ar[d]\\
			& &  \Sigma^7 \ar[r] & Sp(2)\times_{S^3}\Sigma^7 \ar[d]\\
			& & & S^7}
		\end{equation}
		It is noteworthy that the bundle in each step can be incorporated to the one in the next lower step, resulting  in a clustered bundle. For instance \footnote{we use the notation $Sp(2)\action{*_1}{*_2}Sp(2)$ in order to indicate which $S^3$ action is taken into account. Namely, $Sp(2)\action{*_1}{*_2}Sp(2)$ denotes the quotient of $Sp(2)\times Sp(2)$ by the $S^3$-action $q(A,B)=(q*_1A,q*_2B)$}:
		\begin{enumerate}[$(a)$]
			\item $Sp(2)\action{\star}{\star}(Sp(2)\action{\bullet}{\bullet}Sp(2))\to \Sigma^7$ is a bundle whose typical fiber is the total space of the bundle $Sp(2)\action{\bullet}{\bullet}Sp(2)\to S^7$;
			\item $Sp(2)\action{\star}{\bullet}(Sp(2)\action{\star}{\star}(Sp(2)\action{\bullet}{\bullet}Sp(2)))\to \Sigma^7$ is a bundle  whose typical fiber is the bundle in item $(a)$;
			\item $Sp(2)\action{\star}{\bullet}(Sp(2)\action{\star}{\bullet}(Sp(2)\action{\star}{\star}(Sp(2)\action{\bullet}{\bullet}Sp(2))))\Sigma^7$ is a bundle  whose typical fiber is the bundle in item $(b)$;
			\item the bundle represented by the whole ladder is: \[\qquad Sp(2)\action{\bullet}{\bullet}(Sp(2)\action{\star}{\bullet}(Sp(2)\action{\star}{\bullet}(Sp(2)\action{\star}{\star}(Sp(2)\action{\bullet}{\bullet}Sp(2)))))\to S^7.\]
		\end{enumerate}
		More generally, every combination of $*_i\in \{\bullet,\star\}$ in
		\begin{equation}\label{eq:ladder}
		Sp(2)\action{*_1}{*_2} Sp(2)\action{*_3}{*_4}~\cdots~\action{*_{2k-1}\!\!}{*_{2k}} Sp(2)\to Sp(2)/*_1
		\end{equation} 
		can be realized as a bundle over $Sp(2)/*_1$, as long as  $*_{2m}\neq *_{2m+1}$. By recursively applying Theorem \ref{conjecture}, one concludes that \eqref{eq:ladder} admits a metric with positive Ricci curvature. Indeed, assume that $P\to M$ is a principal $G$-bundle  with an additional  $H$-action, commuting with $G$. Then, $P$ admits an $H$-invariant connection $1$-form  (which is easily obtained by an average argument. More details can be found in \cite{SperancaCavenaghiPublished}). Moreover, the canonical variation of an $H$-invariant bundle metric  is again $H$-invariant, if the horizontal space is given by an $H$-invariant form.

		%	    \begin{equation}
		%	        \xymatrix@C=1.2em@R=1.4em{
		%	        Sp(2) \ar[d] &\\
		%         Sp(2)\times_{S^3}Sp(2) \ar[d] \\
		%            \Sigma^7 \ar[r] &   Sp(2)\times_{S^3}\Sigma^7 \ar[d]\\
		%           &  S^7 \ar[r] &Sp(2)\times_{S^3}S^7 \ar[d]\\
		%          & &  \Sigma^7 \ar[r] & Sp(2)\times_{S^3}\Sigma^7 \ar[d]\\
		%          & & & \Sigma^7}
		%	    \end{equation}
	\end{enumerate}
	%\item 
	\subsubsection{} The construction in the last item can be carried out for any bundle $P$ satisfying \eqref{eq:CD}, simply by replacing $Sp(2)$ by $P$ in \eqref{eq:ladder}:
	\begin{equation}\label{eq:ladderP}
	P\action{*_1}{*_2} P\action{*_3}{*_4}~\cdots~\action{*_{2k-1}\!\!}{*_{2k}} P\to P/*_1,
	\end{equation} 
	as far as  $*_{2m}\neq *_{2m+1}$, $m=1,...,k-1$. Or, even more generally, any family $P_0,...,P_k$, where $P_i$ is equipped with a $G_i\times H_i$-action can be realized as a clustered bundle, as in \eqref{cd:stairs}, as long as the $G_i$- and $H_i$-actions are free and  $H_i=G_{i+1}$ (at least, up to isomorphism):
	\begin{equation}\label{eq:ladderPs}
	P_0\action{H_0}{G_1} P_1\action{H_1}{G_2}~\cdots~\action{H_{k-1}}{G_{k}} P_k\to P_0/H_0.
	\end{equation} 
	As an example, $P_0,...,P_k$ can be taken as any combination of the bundles in the $\star$-diagrams related to items $(i),(ii)$ and $(iii)$ is Theorem \ref{thm:llohann}.
	
	The constructions \eqref{eq:ladderP},\eqref{eq:ladderPs} circumvent two issues of different nature.
	On the one hand, to stack one bundle over the typical fiber of another bundle, one tends to lift  transition functions. That is, given  two bundles, $F\hookrightarrow P\to M$ and $E\to F$, one way to produce a bundle over $M$ whose fibers are $E$ is to lift the transition functions (which usually lie from $\rm{Diff}(F)$ to $\rm{Diff}(E)$. On the other hand, if one ignores the interpretation of \eqref{eq:ladderP} as an iterated bundle, the question on whether the quotient $P_0\action{H_0}{G_1} P_1\action{H_1}{G_2}~\cdots~\action{H_{k-1}}{G_{k}} P_k$ admits  a metric of positive Ricci curvature or not would usually not have a clear answer. As in \cite{pro2014riemannian}, the quotient of a Ricci-positively curved manifold does not need to have positive Ricci curvature.
	Both issue are avoided by realizing \eqref{eq:ladderPs} as a bundle.

	\subsection{Wilhelm bundles} Another closely related construction is the $Sp(2,m)\to S^4$ bundle in \cite{wilhelm-lots}. Consider $S^7$ as in \eqref{eq:Sp2} and $S^4$ as the unitary sphere in $\bb R\times \bb H$. Consider the maps $\rm{pr_1},\rm{pr_2}:Sp(2)\to S^7$, $h,\tilde h:S^7\to S^4$, defined by
	\begin{gather*}
	pr_1\begin{pmatrix}
	a & c \\ b & d
	\end{pmatrix}=\begin{pmatrix}
	a \\ b
	\end{pmatrix},\qquad 
	pr_2\begin{pmatrix}
	a & c \\ b & d
	\end{pmatrix}=\begin{pmatrix}
	c \\ d
	\end{pmatrix} \\
	\qquad	h\begin{pmatrix}
	a\\ b
	\end{pmatrix}=\begin{pmatrix}
	|a|^2-|b|^2\\ 2a \bar b
	\end{pmatrix}, \qquad	
	\tilde h\begin{pmatrix}
	a\\ b
	\end{pmatrix}=\begin{pmatrix}
	|a|^2-|b|^2\\ 2 \bar a b
	\end{pmatrix}.
	\end{gather*}
	%\end{multicols}	
	%	and $k(a,b)=(\bar a, \bar b)$. 
	$Sp(2,m)$ is defined as the submanifold:
	\begin{multline*}
	Sp(2,m)=\{(u_1,...,u_m)\in (S^7)^m~|~h(u_1)=\alpha h(u_2),~\tilde h(u_2)=\alpha h(u_3),\\~\tilde h(u_3)=\alpha h(u_4),\ldots,~\tilde h(u_{m-1})=\alpha h(u_m) \}, 
	\end{multline*}
	where $\alpha$ stands for the antipodal map in $S^4$. It is a principal $(S^3)^m$-bundle over $S^4$, with action
	\begin{multline}\label{eq:actionWilhelm}
	(q_1,...,q_{m})\cdot(u_1,u_2,u_3,...,u_m)\\=(u_1\bar q_1,u_2\bar q_2,q_2u_3\bar q_3,...,q_{m-2}u_{m-1}\bar q_{m-1},q_{m-1}u_m\bar q_m  ).
	\end{multline}
	(we referee to \cite{wilhelm-lots} for two other families of actions. We have chosen this particular action in order to imitate the $Spin(4)=S^3\times S^3$-action in $Sp(2)$ through the first coordinates $u_1,u_2$.)
	Recall that the pull-back of $\pi:P\to M$ by $f:N\to M$ can be defined as the bundle $f^*\pi:f^*P\to N$ where 
	\[f^*P=\{(x,p)\in N\times P~|~\pi(p)=f(x) \}, \]
	and $f^*\pi(x,p)=x$. Hence, $Sp(2,m)$ coincides with the bundle obtained by pulling 
	\begin{align*}
	\pi_{m-1}:Sp(2,m-1)&\to S^4 \\ (u_1,...,u_{m-1})&\mapsto \tilde u_{m-1}
	\end{align*}
	back by $\alpha h:S^7\to S^4$. 
	Using this description, since  $\alpha h$ is a principal $S^3$-bundle, we conclude that forgetting $u_m$ produces a principal $S^3$-bundle $Sp(2,m)\to Sp(2,m-1)$.  Thus, a recursive application of Theorem \ref{conjecture} to the sequence 
	\[Sp(2,m)\to Sp(2,m-1)\to \cdots \to Sp(2,3)\to Sp(2)\] 
	guarantees that $Sp(2,m)$ admits a metric with positive Ricci curvature. Nevertheless, 
	
	Next, we take the opportunity to observe that the bundle $Sp(2,m)\to Sp(2,m-1)$ is actually trivial for $m>2$. Moreover, we observe that $Sp(2,m)$ is actually diffeomorphic to $(S^3)^{m-2}\times Sp(2)$. 
	
	To this aim, let us see $Sp(2,m)$ from a different point of view. First of all, observe that $Sp(2)$ can be defined as the subset (compare \cite{rigas-triality}):
	\[Sp(2)=\{(u_1,u_2)\in S^7\times S^7~|~h(u_1)=\alpha h(u_2) \}. \]
	Therefore, we arrive at the following equivalent description of $Sp(2,m)$:
	\begin{multline*}
	Sp(2,m)=\{(u_1,...,u_m)\in (S^7)^m~|~(u_1,u_2),(ku_2,u_3),...,(ku_{m-1},u_m)\in Sp(2) \}, 
	\end{multline*}
	where $k(a,c)^T=(\bar a,\bar c)$. In particular, $Sp(2,m)$ is diffeomorphic to 
	\begin{multline}\label{eq:Sp2'}
	Sp(2,m)'=\{(Q_1,...,Q_m)\in Sp(2)^m~|~k\rm{pr_2}(Q_i)=\rm{pr}_1(Q_{i+1}),~i=1,...,m\}.
	\end{multline}
	Interestingly enough,  $k:S^7\to S^7$ is a linear isometry with positive determinant,  hence homotopic to the identity. 
	%We readily conclude that there is a diffeomorphism between
	%the following pull-back bundles:
	%\begin{gather}
	%\xymatrix{Sp(2)\ar[r]^{\rm{pr_1}}\ar[d]^{\rm{pr}_2} & S^7\ar[d]^{ah} \\ S^7\ar[r]^{h} & S^4  }\qquad\qquad
	%\xymatrix{\overline{Sp(2)}\ar[r]^{\rm{pr_1}}\ar[d]^{\rm{pr}_2} & S^7\ar[d]^{ah} \\ S^7\ar[r]^{hk} & S^4  }
	%\end{gather}
	%To conclude that $Sp(2,m)\cong(S^3)^{m-2}\times Sp(2)$, 
	%We readily conclude that $Sp(2,3)$
	%we first consider $Sp(2,3)$. For simplicity, we consider $Sp(2,3)'=\{((u_1,u_2),(u_3,\bar u_2))\in Sp(2)^2\}$. First, note that . 
	%Denote $T(u_1,u_2)=(u_2,u_1)$ 
	
	To begin with the $Sp(2,3)\to Sp(2)$ case, observe that  $Sp(2,3)'$ is the pull-back bundle in the first diagram below. Moreover, it is isomorphic to the one in the second diagram, since $k$ is homotopic to the identity:
	\begin{gather*}
	\xymatrix{Sp(2,3)\ar[r]\ar[d] & Sp(2)\ar[d]^{\rm{pr}_1} \\ Sp(2)\ar[r]^{k\rm{pr}_2} & S^7~ ,  }\qquad\qquad \xymatrix{\overline{Sp(2,3)}\ar[r]\ar[d] & Sp(2)\ar[d]^{\rm{pr}_1} \\ Sp(2)\ar[r]^{\rm{pr}_2} & S^7~.  }
	\end{gather*}
	On the other hand, any of the above definitions of $Sp(2)$ guarantees that $(u_1,u_2)\in Sp(2)$ if and only if $(u_2,u_1)\in Sp(2)$. Therefore, 
	\[((u_1,u_2),(u_2,u_1))\in \overline{Sp(2,3)} \] is a section for the principal $S^3$-bundle $\bar\pi:\overline{Sp(2,3)}\to Sp(2)$, guaranteeing its triviality. Since $Sp(2,3)$ and $\overline{Sp(2,3)}$ are isomorphic, one concludes that $Sp(2,3)$ is trivial. 
	
	By keeping track of the $Sp(2,3)$-pull-back diagram, we conclude that there is a map $F:Sp(2)\to Sp(2)$ such that $(Q,F(Q))\in Sp(2,3)$. More generally, 
	\[(Q,F(Q),F^2(Q),...,F^{m-1}(Q))\in Sp(2,m)' \]
	is a section for \eqref{eq:actionWilhelm}. Finally,
	we can define the diffeomorphism $\Phi:(S^3)^{m-2}\times Sp(2)\to Sp(2,m)'$ by
	\[\Phi(q_3,...,q_{m},Q)=(1,1,q_3,...,q_m)\cdot(Q, F(Q),F^2(Q),F^3(Q),...,F^{m-1}(Q) ). \]
	
	Finally, we recall that $Sp(2,m)$, equipped with the inherited metric from $(S^7)^m$, admits an isometric $(S^3)^{m-1}$-action whose quotient is the $(m,-n)$-Milnor bundle, for each $0\leq n\leq m$. Therefore, the diffeomorphism $Sp(2,m)\cong(S^3)^{m-2}\times Sp(2)$ could be taken with some surprise, since there are no obvious actions on $(S^3)^{m-2}\times Sp(2)$ with such quotients. By applying Theorem \ref{conjecture} once again, we conclude that not only has $(S^3)^{m-2}\times Sp(2)$ a Ricci-positive metric invariant by such unusual action, but also every bundle associated to it whose fibers are compatible with Theorem \ref{conjecture}. A simple application of the construction produces a  $(\Sigma^7)^{m-1}$-bundle over $\Sigma^7$, where $\Sigma^7$ is any exotic 7-sphere. 
	
	%    \item
	\subsection{Shrinking Ricci Solitons}
	In \cite{dancer}, examples of compact Shrinking Ricci Solitons with positive Ricci curvature were constructed:
	
	\begin{thm}[Dancer--Wang]\label{thm:dancer1}
		Let $(V_i,J_i,h_i), 1\leq i\leq r,~r\geq 3,$ be Fano K\"ahler--Einstein manifolds with complex dimension $n_i$ and first Chern class $p_i\alpha_i,$ where $p_i > 0,$ and $\alpha_i$ are indivisible classes on in $H^2(V_i;\mathbb{Z}).$ Let $V_1$ and $V_r$ be complex projective spaces with normalized Fubini--Study metric. Let $P_q$ denote the principal $S^1$--bundle over $V_1\times V_2\times \ldots V_r$ with Euler class $-\pi_1^{\ast}(a_1) + \sum_{i=2}^{r-1}q_i\pi_i^{\ast}(a_i) + \pi_r^{\ast}(a_r).$
		
		Suppose in addition that $-(n_1+1)q_i < p_i$ and $(n_r+1)q_i < p_i,~\forall 2\leq i \leq r-1.$ Then, there is a compact shrinking gradient K\"ahler Ricci soltion structure on the total space $\overline M$ obtained from $P_q\times_{S^1}\mathbb{CP}^1$ by blowing $P_q$ down to $V_2\times \ldots\times V_r$ at one end and $V_1\times \ldots \times V_{r-1}$ at the other end, iff for some $\kappa_1 \in \mathbb{R},$ the integral
		\[I := \int_{n_r-1}^{n_r+1}e^{-2\kappa_1(x+n_1+1)}\prod_{i=1}^r\left(x-\dfrac{p_i}{q_i}\right)^{n_i}dx\]
		vanishes. If $\kappa_1 \neq 0,$ the soliton is non--Einstein.
	\end{thm}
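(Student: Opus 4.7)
The plan is to construct the Kähler metric on $\overline M$ via the momentum (Calabi) construction, reducing the shrinking Kähler--Ricci soliton equation to a single ODE in one real variable, and then to show that the integral condition $I=0$ is precisely the closure condition that lets this ODE produce a globally smooth metric compatible with the blow-downs at the two ends of the $\bb{CP}^1$-fiber.

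First I would set up the ansatz. Away from the two exceptional divisors, $\overline M$ is the total space of $P_q\times_{S^1}\bb{CP}^1$, which fibers over $V_1\times\cdots\times V_r$ with $\bb{CP}^1$-fibers and carries a natural $S^1$-action rotating those fibers. Write the Kähler form as
\[
\omega = \sum_{i=1}^{r}(p_i+q_i x)\,\pi_i^*\omega_i + \varphi(x)^{-1}\,dx\wedge d\theta,
\]
where $\omega_i$ is the Kähler--Einstein form on $V_i$ normalized by $\Ricci(\omega_i)=p_i\omega_i$, where $x$ is the moment coordinate for the $S^1$-action on the fiber (conjugate to $\theta$) taking values in an interval $[n_r-1,n_r+1]$ determined by the Euler class, and where $\varphi\geq 0$ is the single unknown function. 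The inequalities $-(n_1+1)q_i<p_i$ and $(n_r+1)q_i<p_i$ ensure that the coefficients $p_i+q_ix$ stay positive on the interval, so $\omega$ defines a genuine Kähler metric on the open part. Choosing the soliton potential $f$ to depend only on $x$, setting $f'(x) = 2\kappa_1$, reduces the equation $\Ricci(\omega)+\Hess f=\rho\,\omega$ to a first-order linear ODE for the product
\[
\Psi(x) := \varphi(x)\,e^{-2\kappa_1(x+n_1+1)}\prod_{i=1}^{r}\!\left(x-\tfrac{p_i}{q_i}\right)^{n_i}\!,
\]
which integrates explicitly in terms of the integrand of $I$.

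Second, I would impose the endpoint conditions. Smoothness of the metric at the blow-downs of $P_q$ to $V_2\times\cdots\times V_r$ and to $V_1\times\cdots\times V_{r-1}$ requires $\varphi(n_r{-}1)=\varphi(n_r{+}1)=0$ with $\varphi'(n_r{-}1)=+2$, $\varphi'(n_r{+}1)=-2$; this is the standard momentum-construction criterion. The hypothesis that $V_1$ and $V_r$ be projective spaces with the Fubini--Study metric is used here, because it makes the restrictions of $P_q$ to those ends isomorphic to the appropriate Hopf-type bundles whose collapses are smooth. One endpoint condition is automatic after normalization; the remaining one, upon integrating the ODE from $x=n_r-1$ to $x=n_r+1$, becomes exactly
\[
\int_{n_r-1}^{n_r+1}e^{-2\kappa_1(x+n_1+1)}\prod_{i=1}^{r}\!\left(x-\tfrac{p_i}{q_i}\right)^{n_i}dx = 0,
\]
i.e.\ $I=0$. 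Conversely, given $I=0$ one solves for $\varphi$ and checks that $\Psi>0$ on the open interval using the sign conditions on the $p_i/q_i$; this yields the smooth compact gradient shrinking Kähler--Ricci soliton structure.

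The main obstacle will be the smooth-extension step: verifying that the local ansatz really glues to a smooth tensor across the two singular divisors after the blow-downs, since both the base factor $p_i+q_i x$ and the fiber factor $\varphi(x)$ degenerate there in a coordinated way. This is where the Fubini--Study hypothesis on the two extreme factors is essential, and where one must carefully inspect the behaviour of $\Psi$ and $\Psi'$ at the endpoints to match the model Kähler form on the $\bb{CP}^{n_1}$-bundle (resp.\ $\bb{CP}^{n_r}$-bundle) obtained after collapsing. Finally, non--Einsteinness when $\kappa_1\neq 0$ is immediate: if $\Hess f\equiv 0$ on a compact manifold then $f$ is constant, forcing $\kappa_1=0$, a contradiction.
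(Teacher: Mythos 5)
First, a point of comparison: the paper contains no proof of this statement at all --- Theorem \ref{thm:dancer1} is imported verbatim from Dancer--Wang \cite{dancer} and used in Section \ref{section:examples} only as an input for building new Ricci-positive examples. So your proposal can only be measured against the original argument in \cite{dancer}, and there your outline does follow the right route: Dancer--Wang work with exactly this kind of cohomogeneity-one/momentum ansatz (metrics on the circle bundle $P_q$ with coefficients affine in the moment coordinate on each K\"ahler--Einstein factor $V_i$, soliton potential affine in that coordinate), reduce the gradient K\"ahler--Ricci soliton equation to a scalar ODE, and obtain the vanishing of $I$ as the solvability condition for the resulting two-point boundary-value problem.

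The gap is that the step you yourself call ``the main obstacle'' is the actual content of the theorem, and your sketch does not supply it. Smooth closing-up at the two ends is not just the circle-collapse condition $\varphi=0$, $\varphi'=\mp 2$: at each end an entire $\mathbb{CP}^{n_1}$- (resp.\ $\mathbb{CP}^{n_r}$-) factor is collapsed along a Hopf fibration together with the circle fiber, so one needs first-order matching of the coefficient $p_1+q_1x$ (resp.\ $p_r+q_rx$) with $\varphi$ at the endpoint; this is precisely what forces $V_1,V_r$ to be Fubini--Study projective spaces, fixes the moment interval and the normalization appearing in the exponential weight $e^{-2\kappa_1(x+n_1+1)}$, and is where the inequalities $-(n_1+1)q_i<p_i$ and $(n_r+1)q_i<p_i$ must be invoked to keep all coefficients positive on the interior. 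Without deriving these boundary conditions you cannot verify that integrating your ODE for $\Psi$ over the interval yields exactly the stated integral $I$, nor justify the claims ``one endpoint condition is automatic after normalization'' and ``$\Psi>0$ on the open interval,'' which are asserted rather than shown. Finally, the ``iff'' in \cite{dancer} is an equivalence within the ansatz class (invariant metrics of this form), not for arbitrary soliton structures on $\overline M$; if you reprove the statement you should either restrict it accordingly or add an argument reducing a general invariant soliton to the ansatz. As it stands the proposal is a correct road map of the Dancer--Wang proof, but its analytic core --- the smooth-extension analysis at the two blow-downs and the exact identification of the closure condition with $I=0$ --- is missing.
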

	
	The behavior of the integral $I$ on Theorem \ref{thm:dancer1} makes them conclude that
	\begin{thm}[Dancer--Wang]\label{thm:dancer2}
		Every compact manifold $\overline M$ described in Theorem \ref{thm:dancer1} admits a structure of gradient shrinking K\"ahler--Ricci soliton $(g,f)$. Furthermore, $(\overline M,g,f)$ is a Fano manifold.
	\end{thm}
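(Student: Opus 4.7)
The plan is to invoke Theorem \ref{thm:dancer1} and reduce both assertions to (i) producing some $\kappa_1 \in \bb R$ for which the integral
\[
I(\kappa_1) = \int_{n_r-1}^{n_r+1} e^{-2\kappa_1(x+n_1+1)} \prod_{i=1}^{r}\left(x-\frac{p_i}{q_i}\right)^{n_i} dx
\]
vanishes, and (ii) a direct Chern-class computation showing that the compactification $\overline M$ produced by Theorem \ref{thm:dancer1} is Fano. Existence of a gradient shrinking K\"ahler--Ricci soliton structure will then follow from the ``iff'' statement in Theorem \ref{thm:dancer1}.

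For (i), I would treat $I$ as a smooth function of $\kappa_1\in\bb R$ and run an intermediate-value argument. The hypotheses $-(n_1+1)q_i < p_i$ and $(n_r+1)q_i < p_i$ for $2 \le i \le r-1$ force each root $p_i/q_i$ (with $2\le i \le r-1$) to lie strictly outside the integration interval $[n_r-1,n_r+1]$, so on the interior the polynomial factor $\prod_{i=2}^{r-1}(x-p_i/q_i)^{n_i}$ has constant sign. The remaining factors with $i=1$ and $i=r$, whose locations are pinned down by the normalization of the Fubini--Study metrics on the projective factors $V_1$ and $V_r$, contribute zeros of prescribed multiplicities $n_1,n_r$ at the two endpoints. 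A Laplace-type asymptotic analysis as $\kappa_1\to\pm\infty$ concentrates $I(\kappa_1)$ near $x=n_r-1$ in one limit and near $x=n_r+1$ in the other, and the opposite parities of these boundary vanishings make the two one-sided limits of opposite sign. Continuity of $\kappa_1\mapsto I(\kappa_1)$ then supplies the required zero; any nonzero $\kappa_1$ yields a non-Einstein soliton, consistent with the last clause of Theorem \ref{thm:dancer1}.

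For (ii), I would realize $\overline M$ as the $\bb C\rm P^1$-bundle $P_q\times_{S^1}\bb C\rm P^1$ together with its two zero-section divisors (the loci of the blow-downs), and compute $c_1(\overline M)$ from the relative Euler sequence of this $\bb C\rm P^1$-bundle combined with $c_1(V_i)=p_i\alpha_i$ and the prescribed Euler class of $P_q$. The very inequalities bounding the $q_i/p_i$ that appeared in (i) translate into positivity of the coefficients of $c_1(\overline M)$ against every effective curve class, whence $c_1(\overline M)$ is a K\"ahler class and $\overline M$ is Fano. The hard part will be the sign bookkeeping in step (i): one must track carefully how the Fubini--Study normalizations fix the endpoints of the integration interval and the parities of the boundary vanishings, so that $I(\kappa_1)$ genuinely changes sign across $\bb R$. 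Once this parity and asymptotic analysis is in place, the intermediate value theorem and standard Chern-class formulas for $\bb C\rm P^1$-bundles finish the job.
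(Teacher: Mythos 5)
The paper offers no proof of this statement: it is quoted from Dancer--Wang \cite{dancer}, and the only justification given is the remark that ``the behavior of the integral $I$'' in Theorem \ref{thm:dancer1} lets Dancer--Wang conclude it. So your proposal has to stand on its own as a reconstruction of the Dancer--Wang argument, and its central step fails. Under the stated hypotheses the integrand of $I$ cannot change sign on the integration interval: the inequalities $-(n_1+1)q_i<p_i$ and $(n_r+1)q_i<p_i$ place every root $p_i/q_i$ with $2\le i\le r-1$ strictly outside the interval (to the right if $q_i>0$, to the left if $q_i<0$), the factor coming from $V_1$, namely $(x+n_1+1)^{n_1}$ (since $p_1/q_1=-(n_1+1)$), is nonnegative there, and the factor $(x-(n_r+1))^{n_r}$ has the fixed sign $(-1)^{n_r}$ at every interior point. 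Hence the integrand has one and the same sign throughout the open interval, for every $\kappa_1$, and both Laplace limits $\kappa_1\to\pm\infty$ carry that same sign: the parity of an endpoint zero only affects the rate of decay (a factor of order $\Gamma(n+1)(2|\kappa_1|)^{-(n+1)}$), never the sign, because the vanishing factor at the left endpoint is positive just inside, while the vanishing factor at the right endpoint has sign $(-1)^{n_r}$ both near that endpoint and everywhere else. So your key claim that ``the opposite parities of these boundary vanishings make the two one-sided limits of opposite sign'' is false, and the intermediate value argument collapses; taken literally, the printed integral is nonzero for every $\kappa_1$.

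This points to the real difficulty: the statement of Theorem \ref{thm:dancer1} as printed is a garbled transcription of Dancer--Wang. With the printed lower limit $n_r-1$ the $V_1$-factor does not even vanish at the left endpoint, so your ``zeros at the two endpoints'' already silently emends the lower limit to $-n_1-1$; but even after that emendation the sign analysis above shows the vanishing condition is unsatisfiable, so the function whose root Dancer--Wang actually find cannot be this integral of a one-signed integrand. Their existence proof rests on the precise reduction of the soliton boundary-value problem on the $\mathbb{CP}^1$-bundle compactification, and reconstructing it requires the exact integrand and boundary conditions from \cite{dancer}, not an intermediate-value argument applied to the formula as printed here. Your part (ii), deducing the Fano property from the Euler sequence of the $\mathbb{CP}^1$-bundle and the inequalities on $q_i/p_i$, is plausible in outline but is asserted rather than carried out; in \cite{dancer} the Fano property of these blown-down total spaces is established as part of their analysis, and it is not the step your write-up itself identifies as ``the hard part.''
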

	
	Since Fano manifolds have positive Ricci curvature, we can use Theorem \ref{thm:dancer2} to build bundles with positive Ricci curvature based on a construction with warped products, following Corollary \ref{thm:trivial}. In fact, one has:
	\begin{thm}
		Let $(M,g,f)$ be a non--Einstein compact gradient shrinking K\"ahler Ricci soltion given by Theorem \ref{thm:dancer1}. Let $F$ be a compact manifold with positive Ricci curvature. Then the Riemannian manifold $M\times_{e^f}F$ has positive Ricci curvature after performing a canonical variation.
	\end{thm}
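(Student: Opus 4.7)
The plan is to reduce the statement to a single application of Corollary \ref{thm:trivial}, with the free constant $a$ appearing there realized geometrically as the parameter of the canonical variation. No feature of the soliton equation \eqref{eq:gsrs} will enter beyond the positivity $\Ricci_g>0$.

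First I would invoke Theorem \ref{thm:dancer2} to conclude that $(M,g)$ is Fano and hence satisfies $\Ricci_g>0$. Combined with the hypothesis $\Ricci_{g_F}>0$, this places $(M,g),(F,g_F)$ in the setting of Corollary \ref{thm:trivial}. I would then apply that corollary with the smooth function $f/2:M\to\mathbb R$, obtaining a constant $a\in\mathbb R$ (as large as we please) such that the warped product
\[
M\times_{e^{2(f/2)-a}}F \;=\; M\times_{e^{f-a}}F
\]
admits a metric of positive Ricci curvature.

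The remaining step is to identify $M\times_{e^{f-a}}F$ with a canonical variation of $M\times_{e^f}F$. The canonical variation of Section 2 rescales the vertical metric by $e^{2t}$, and since the vertical metric of $M\times_{e^f}F$ depends only on the exponent $f$ (up to convention), a suitable negative choice of $t$ (namely $t=-a$ or $t=-a/2$, depending on whether one writes $M\times_\varphi F$ to mean $g_M+\varphi^2 g_F$ or $g_M+\varphi g_F$) transforms the warping exactly into $e^{f-a}$. The only subtle point, and therefore the likeliest place to stumble, is to notice that Corollary \ref{thm:trivial} holds for \emph{arbitrary} smooth functions on the base, so it is legitimate to feed it $f/2$ (rather than the soliton potential $f$ itself) in order that $2(f/2)-a=f-a$ match the exponent produced by a canonical variation of $M\times_{e^f}F$. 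Apart from this essentially bookkeeping matter, no analytic or curvature-theoretic difficulty arises.
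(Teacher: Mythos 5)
Your proposal is correct and follows essentially the same route as the paper, which offers no argument beyond invoking Theorem \ref{thm:dancer2} (positive Ricci curvature on $M$) and then Corollary \ref{thm:trivial}, with the constant $a$ there realized as the canonical-variation parameter. Your only addition is the bookkeeping step of feeding $f/2$ into Corollary \ref{thm:trivial} so that the resulting warping $e^{f-a}$ matches the exponent $e^f$ in the statement, which in fact reconciles the paper's own notational discrepancy between $M\times_{e^f}F$ in the theorem and $M\times_{e^{2f}}F$ in the subsequent examples.
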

	
	One can proceed to give more explicit examples. In fact, in \cite[pg 21]{dancer}, the following explicit examples of SRS with positive Ricci curvature are constructed:
	\begin{enumerate}
		\item $(M,g,f)$ as a non--trivial shrinking K\"ahler--Ricci soliton on the total space of a $\mathbb{CP}^1$--bundle over $\mathbb{CP}^2\times \mathbb{CP}^2,$
		\item $(M,g,f)$ as a non--trivial shrinking K\"ahler--Ricci soliton on the total space of a $\mathbb{CP}^1$--bundle over a product of two copies of $\mathcal{F}(3,3),$ where $\mathcal{F}(3,3)$ is the irrational Clemens-Griffiths three-fold.
	\end{enumerate}
	
	Let $F$ be any example of a manifold with positive Ricci curvature constructed with the methods outlined above. In particular, one can simply consider $F$ as any manifold satisfying the hypotheses in Theorem \ref{thm:llohann}. Then, the following Riemannian manifolds have positive Ricci curvature after performing a canonical variation:
	\begin{enumerate}
		\item $\widehat M_1 := M\times_{e^{2f}} F$, where $(M,g,f)$ is a non--trivial shrinking K\"ahler--Ricci soliton on the total space of a $\mathbb{CP}^1$--bundle over $\mathbb{CP}^2\times \mathbb{CP}^2,$
		\item $\widehat M_2 := M\times_{e^{2f}} F,$ where $(M,g,f)$ is a non--trivial shrinking K\"ahler--Ricci soliton on the total space of a $\mathbb{CP}^1$--bundle over a product of two copies of $\mathcal{F}(3,3),$ where $\mathcal{F}(3,3)$ is the irrational Clemens-Griffiths three-fold,
		\item Furthermore, one can proceed indefinitely by taking $\widehat M_{i+1} := M\times_{e^{2f}}\widehat M_i,~i\in \{1,2,\ldots\},$ and construct new examples of manifolds with positive Ricci curvature. Here, $(M,g,f)$ can be taken as any of the examples in the previous items. 
	\end{enumerate}
	%            \end{enumerate}
	
	% ------------------------------------------------------------------------

	\subsection*{Acknowledgments}
	The authors are grateful to the anonymous referees for the enormous patience on reading (several) earlier versions of this manuscript and giving them precise directions on how to improve it. The first author would like to thank Augusto Pereira, Gustavo Costa and Renato Júnior for the criticism and help with the implementation of the referees suggestions, as well to the complete English review. He also thank Prof. Marcos Alexandrino, Prof. Francisco Caramello and Daniel Fadel for the encouragement and for their useful comments on earlier versions of this paper.
	
	\bibliographystyle{alpha}
	\bibliography{bjourdoc}
	
\end{document}